\patchcmd{\maketitle}{\@fnsymbol}{\@arabic}{}{}
\title{Quantile-RK and Double Quantile-RK Error Horizon Analysis}
\author{Emeric Battaglia\footnote{\url{ebattagl@uci.edu}, corresponding author}, Anna Ma\\\\ \textit{Department of Mathematics, University of California, Irvine, CA 92697 USA}}
\date{\today}
\newtheorem{theorem}{\textit{Theorem}}
\newtheorem{lem}{\textit{Lemma}}
\newtheorem{cor}{\textit{Corollary}}
\newenvironment{fakeproof}[1][``\textit{Proof}'']
{\proof[#1]}
{\endproof}
\newcommand{\groupp}[1]{\l(#1\r)}
\newcommand{\groupb}[1]{\l[#1\r]}
\newcommand{\groupc}[1]{\l\{#1\r\}}
\renewcommand{\l}{\left}
\renewcommand{\r}{\right}
\newcommand{\E}{\mathbb{E}}
\newcommand{\N}{\mathbb{N}}
\newcommand{\PP}{\mathbb{P}}
\newcommand{\R}{\mathbb{R}}
\newcommand{\x}{{x^\ast}}
\newcommand{\s}{\sigma}
\newcommand{\vp}{\varphi}
\newcommand{\ve}{\varepsilon}
\newcommand{\sset}{\subseteq}
\newcommand{\sm}{\setminus}
\newcommand{\dl}{^*}
\newcommand{\ra}{\rightarrow}
\renewcommand{\lll}{\langle}
\newcommand{\rrr}{\rangle}
\renewcommand{\leq}{\leqslant}
\renewcommand{\geq}{\geqslant}
\newcommand{\Def}{\vcentcolon =}
\DeclareMathOperator*{\argmin}{arg\,min}
\DeclareMathOperator{\quant}{-quant}
\DeclareMathOperator{\supp}{supp}
\DeclareMathOperator{\unif}{Unif}
\DeclareMathOperator{\TP}{TP}
\DeclareMathOperator{\FP}{FP}
\DeclareMathOperator{\TN}{TN}
\DeclareMathOperator{\FN}{FN}
\begin{document}

\maketitle

\begin{abstract}
In solving linear systems of equations of the form $Ax=b$, corruptions present in $b$ affect stochastic iterative algorithms' ability to reach the true solution $\x$ to the uncorrupted linear system. The randomized Kaczmarz method converges in expectation to $\x$ up to an error horizon dependent on the conditioning of $A$ and the supremum norm of the corruption in $b$. To avoid this error horizon in the sparse corruption setting, previous works have proposed quantile-based adaptations that make iterative methods robust. Our work first establishes a new convergence rate for the quantile-based random Kaczmarz (qRK) and double quantile-based random Kaczmarz (dqRK) methods, which, under certain conditions, improves upon known bounds. We further consider the more practical setting in which the vector $b$ includes both non-sparse ``noise" and sparse ``corruption". Error horizon bounds for qRK and dqRK are derived and shown to produce a smaller error horizon compared to their non-quantile-based counterparts, further demonstrating the advantages of quantile-based methods. 
\end{abstract}

\begin{adjustwidth}{3em}{3em}
    \noindent\textbf{Key Words:} Kaczmarz method, quantile methods, randomized iterative methods, corruption, error horizon \\\\
    \noindent\textbf{MSC Codes:} 65F10, 65F20
\end{adjustwidth}

\section{Introduction}
Finding a solution $\x$ to a \textit{consistent} linear system of equations 
    \begin{equation}
        Ax=b, \label{eq:syseq}
    \end{equation}
    where $A \in \mathbb{R}^{m \times n}$ and $m \geq n$ is a critical subroutine in many modern applications of scientific computing and mathematical data science. A low-memory footprint iterative approach for solving~\eqref {eq:syseq} is called the \textit{Kaczmarz method}~\cite{kacz}. The Kaczmarz method produces iterates $x_k$ by projecting $x_{k-1}$ onto the solution hyperplane $\{x:\lll x,a_i\rrr =b_i\}$, where $a_i$ is the $i$-th row of $A$ and $i$ is selected deterministically.  
    The most notable alteration to this algorithm, which is known as the Randomized Kaczmarz algorithm (RK), was proposed by Strohmer and Vershynin in \cite{RK}; the authors proved that when the index $i$ is selected randomly, with probability proportional to $\|a_i\|^2$,  and~\eqref{eq:syseq} is consistent, the expected error $\|x_k-\x\|^2$ diminishes to $0$ exponentially. 

    When the system~\eqref{eq:syseq} is not consistent, the least-squares solution is typically desired. However, because iterates of the RK algorithm live in the row space of $A$, and the least-squares solution need not, RK is not expected to converge to the least squares solution of an inconsistent system~\cite{needell_LS}. To address this, Zouzias and Freris \cite{Zouzias_REK} devised and provided convergence guarantees of a variant of RK, called the randomized extended Kazmarz algorithm (REK), which, in addition to performing the standard RK iterations, uses columns of the matrix $A$ to approximate the least squares solution. Other variants of RK, generally building on REK, have been proposed to meet specific regularization criteria, such as approximating a sparse least-squares solution \cite{sparseREK}.

    The system \eqref{eq:syseq} may be inconsistent due to the given vector $b$ being corrupt. To model this as an additive error, we decompose $b$ into $b=b_t +\ve$ where $b_t$ is the ``true'' value, i.e., $b_t = A \x$ and $\ve$ is the additive corruption. As the underlying system 
    \begin{equation}
        Ax= b_t, \label{eq:truesyseq}
    \end{equation}
    has a solution $\x$, it would be wishful to think iterates $x_k$ generated by RK can approximate $\x$ to any desired precision. Since only $b$ is given, and not $b_t$, it has been shown that $x_k$ can approximate $\x$ up to an error horizon \cite{needell_LS} when $A$ has full rank. For specific types of corruption, statistical methods have proven useful. For example, in the presence of possibly adversarially distributed corruption, using the mode of the residuals in each iteration can be employed to attain convergence results \cite{adversarialcorruption}. If, instead, the corruption is sparse and unbounded, using a quantile-based variation of RK allows for convergence to $\x$ \cite{qRK,qRK2,dqRK}.  

    The main contribution of this work is twofold: we first provide an alternative convergence bound for the quantile-based Kaczmarz methods, which is provably smaller than other known bounds. We then demonstrate that, under mild conditions on the noise, corruption, and conditioning of the matrix, the quantile-based variations of RK have a provably (and empirically) better convergence error horizon than guaranteed by their non-quantile-based counterpart RK~\cite{needell_LS}.
    
    \subsection{Notation}
        \label{sec:notation}
        Let $[m]=\{1,\dots, m\}$. For a vector $v \in \mathbb{R}^m$, we denote its $i^{th}$ entry by $v_i$ and we define $\supp(v)= \{i: v_i\neq 0\}$, with the following exceptions: $a_i$ which denotes the $i^{th}$ row of $A$, $b_t: = A\x$, where $\x$ is the underlying true solution, and subscript $k$ is reserved to denote the $k^{th}$ iterate, e.g., $x_k$. For any set of indices $I \subseteq [m]$, the term $A_I\in \R^{|I|\times n}$ will denote the submatrix of $A$ whose rows are given by $S$. To denote the largest and smallest non-zero singular value of a matrix $A$, we write $\s_{\max}(A)$ and $\s_{\min}(A)$, respectively. The norms used in this paper are the Frobenius norm $\|\cdot\|_F$, the $\ell_1$-norm $\|\cdot\|_1$, the Euclidean norm $\|\cdot\|$, and the sup norm $\|\cdot \|_\infty$. Though not technically a norm, we will use $\|\cdot\|_0$ to denote the $\ell_0$-norm, whose output is the number of nonzero entries in the input. 
        
        For any finite multiset $S$, we let $q\quant(S)$ denote the $q$-th quantile of $S$. We assume that $q|S|$ is always integer-valued. Because $S$ may be a multiset, we also adopt the convention that $L_q \Def \{s\in S: s\leq q\quant(S)\}$, i.e., the set containing the first $q|S|$ elements $s\in S$ satisfying $s\leq q\quant(S)$. Similarly, we define $\{s\in S: q\quant(S)<s\}\Def S\sm L_q$, where elements of $L_q$ are removed from $S$ according to multiplicity. 
        
        Another quantity of importance is 
        \begin{equation}
            \s_{q,\min}(A)=\min_{\substack{I\sset [m] \\ |I|=q m}}\inf_{\|x\|=1}\|A_I x\|,
        \end{equation}
        where $q\in [0,1]$ with $qm \in \N$. This term serves to bound $\|A_Ix\|\geq \|x\|\s_{q,\min}(A)$ whenever $I\sset [m]$ with $|I|\geq qm$. 
        
        In this work, we consider the system:
        \begin{equation}
            Ax = b_t+\eta+\xi, \label{eq:genlinsys}
        \end{equation}
        where $A\in \R^{m\times n}$, $m \geq n$ and $A$ is of full rank and $b = b_t+\eta+\xi \in \mathbb{R}^{m}$ are given. Here, we assume $Ax=b_t$ is consistent and has solution $\x$. The terms $\eta$ and $\xi$ are corruption introduced to $b_t$. To distinguish between $\eta$ and $\xi$, it is assumed $\xi$ is sparse; $\beta$ will serve as the parameter characterizing this sparsity, i.e. $\|\xi\|_0\leq \beta m$ for $\beta\in[0,1]$. Though there is no unique decomposition of the corruption into $\eta+\xi$, the intuition for the results of the paper will best follow from considering $\eta$ to be small, bounded noise and $\xi$ being unbounded, sparse corruption.        
    
    \subsection{Background}
        RK is a stochastic iterative algorithm designed to approximate the solution $\x$ to the system \eqref{eq:genlinsys} when there is no corruption present in $b$, i.e. $\eta+\xi = 0$ \cite{RK}. The iterates $x_k$ are generated by projecting the previous iterate onto a solution hyperplane $H_i=\{x:\lll x, a_i\rrr = b_i\}$ where $i$ is randomly chosen with probability proportional to $\|a_i\|^2$, that is,
        \begin{equation}
            x_{k+1}=x_k+\frac{b_i-\lll x_k,a_i\rrr}{\|a_i\|^2}a_i. \label{eq:iterate_update}
        \end{equation}
        When $m \geq n$, $A$ is of full rank, and the system is consistent, Strohmer and Vershynin showed in \cite{RK} that the approximates $x_k$ converge to $\x$ with the expected error bounded by: 
        \begin{equation}
            \E\|x_k-\x\|^2\leq \groupp{1-\frac{\s_{\min}^2(A)}{\|A\|_F^2}}^k\|x_0-\x\|^2.
        \end{equation}
        When corruption is present and the system is inconsistent, RK fails to procure iterates that can approximate $\x$, or even $x_{LS} = A^\dagger b$, with arbitrary accuracy. Without any modification to the RK algorithm, convergence is only guaranteed up to an error horizon given by Needell \cite{needell_LS}:

        \begin{theorem}[{\cite[Theorem 1]{needell_LS}}]
            The iterates of RK applied to the system \eqref{eq:genlinsys} have an expected decay given by
            \[
                \E\|x_k-\x\|^2 \leq \groupp{1-\frac{\s_{\min}^2}{\|A\|_F^2}}^k \|x_0-\x\|^2+\frac{\|A\|_F^2}{\s_{\min}^2}\max_{j}\frac{|\eta_j+\xi_j|^2}{\|a_j\|^2}.
            \]
            \label{theorem:EH}
        \end{theorem}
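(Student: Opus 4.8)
The plan is to track the error $\|x_{k+1}-\x\|^2$ conditioned on the previous iterate $x_k$ and then take a full expectation, iterating the resulting recursion. First I would observe that since $x_{k+1}$ is obtained by projecting $x_k$ orthogonally onto the hyperplane $H_i = \{x : \lll x, a_i\rrr = b_i\}$ (with $b = b_t + \eta + \xi$), and $\x$ satisfies $\lll \x, a_i\rrr = (b_t)_i = b_i - (\eta_i + \xi_i)$, the displacement $x_{k+1} - x_k$ is parallel to $a_i$ while $x_k - x_{k+1}$ relates to $x_{k+1} - \x$ through the Pythagorean identity only \emph{up to} the corruption offset. Concretely, write
\[
    x_{k+1} - \x = x_k - \x + \frac{b_i - \lll x_k, a_i\rrr}{\|a_i\|^2} a_i = \groupp{I - \frac{a_i a_i^\top}{\|a_i\|^2}}(x_k - \x) + \frac{\eta_i + \xi_i}{\|a_i\|^2} a_i,
\]
using $b_i - \lll x_k, a_i\rrr = \lll \x - x_k, a_i\rrr + (\eta_i + \xi_i)$. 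The two terms on the right are orthogonal (the first lies in the hyperplane through the origin normal to $a_i$, the second is along $a_i$), so
\[
    \|x_{k+1} - \x\|^2 = \left\|\groupp{I - \frac{a_i a_i^\top}{\|a_i\|^2}}(x_k - \x)\right\|^2 + \frac{|\eta_i + \xi_i|^2}{\|a_i\|^2}.
\]

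Next I would take the conditional expectation over the random row index $i$, which is chosen with probability $p_i = \|a_i\|^2 / \|A\|_F^2$. For the first term, the standard RK computation gives
\[
    \E_i\left\|\groupp{I - \frac{a_i a_i^\top}{\|a_i\|^2}}(x_k - \x)\right\|^2 = \|x_k - \x\|^2 - \sum_i \frac{\|a_i\|^2}{\|A\|_F^2} \cdot \frac{|\lll x_k - \x, a_i\rrr|^2}{\|a_i\|^2} = \|x_k - \x\|^2 - \frac{\|A(x_k - \x)\|^2}{\|A\|_F^2},
\]
and since $A$ has full rank and $x_k - \x$ lies in the row space of $A$ (as $x_0 - \x$ does and each update stays in the row space), $\|A(x_k - \x)\|^2 \geq \s_{\min}^2 \|x_k - \x\|^2$, yielding the contraction factor $1 - \s_{\min}^2/\|A\|_F^2$. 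For the second term,
\[
    \E_i \frac{|\eta_i + \xi_i|^2}{\|a_i\|^2} = \sum_i \frac{\|a_i\|^2}{\|A\|_F^2} \cdot \frac{|\eta_i + \xi_i|^2}{\|a_i\|^2} = \frac{1}{\|A\|_F^2} \sum_i |\eta_i + \xi_i|^2 = \frac{\|\eta + \xi\|^2}{\|A\|_F^2}.
\]
Rather than keep the exact sum, I would bound it crudely by $\|\eta + \xi\|^2 = \sum_j |\eta_j + \xi_j|^2 \leq \|A\|_F^2 \max_j \frac{|\eta_j + \xi_j|^2}{\|a_j\|^2}$ (since $\|A\|_F^2 = \sum_j \|a_j\|^2$), so the conditional expectation of the additive term is at most $\max_j |\eta_j + \xi_j|^2 / \|a_j\|^2$. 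This gives the one-step recursion
\[
    \E\|x_{k+1} - \x\|^2 \leq \groupp{1 - \frac{\s_{\min}^2}{\|A\|_F^2}} \E\|x_k - \x\|^2 + \max_j \frac{|\eta_j + \xi_j|^2}{\|a_j\|^2}.
\]

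Finally I would unroll this recursion: with $\rho = 1 - \s_{\min}^2/\|A\|_F^2 \in [0,1)$ and $c = \max_j |\eta_j + \xi_j|^2/\|a_j\|^2$, induction gives $\E\|x_k - \x\|^2 \leq \rho^k \|x_0 - \x\|^2 + c \sum_{j=0}^{k-1} \rho^j \leq \rho^k \|x_0 - \x\|^2 + c/(1-\rho)$, and $1/(1-\rho) = \|A\|_F^2/\s_{\min}^2$, which is exactly the claimed bound. The main obstacle — really the only subtle point — is justifying the orthogonal decomposition and the fact that $x_k - \x$ stays in the row space of $A$ so that $\s_{\min}$ (the smallest \emph{nonzero} singular value) controls the contraction; everything else is the textbook RK argument plus a coarse norm bound on the noise contribution. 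Care is also needed to confirm $x_0$ is assumed (or can be taken) in the row space of $A$, which is the standard convention for RK-type results.
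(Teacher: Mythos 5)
Your proof is correct and follows the same route as the cited Needell argument (which the paper itself does not re-prove): decompose $x_{k+1}-\x$ into the orthogonal projection of $x_k-\x$ plus a parallel corruption term $\frac{\eta_i+\xi_i}{\|a_i\|^2}a_i$, apply Pythagoras, take the row-weighted expectation, bound the noise term by its maximum, and unroll the geometric recursion. One small simplification you could make: since the paper's standing assumption is that $A\in\R^{m\times n}$ with $m\geq n$ has full rank, $A$ has full column rank and thus $\|Av\|\geq \s_{\min}(A)\|v\|$ for every $v\in\R^n$, so your caveat about $x_k-\x$ staying in the row space (and the choice of $x_0$) is automatically satisfied and need not be flagged as a subtle point.
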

        \Cref{theorem:EH} asserts that, in expectation, the iterates of RK converge to a ball centered at $\x$ with radius $\frac{\|A\|_F^2}{\s_{\min}^2}\max_{j}\frac{|\eta_j+\xi_j|^2}{\|a_j\|^2}$, which we refer to as the \textit{error horizon}. 
        One may wish to ``push past'' the error horizon and converge to the least-squares solution $x_{LS}$ of \eqref{eq:genlinsys}. This is accomplished by other works such as REK \cite{Zouzias_REK} and the randomized block Kacmzarz \cite{block_LS}. Such algorithms, in principle, work by approximating $\x$ by the iterates $x_k$ using approximations of $b$'s projection in the column space of $A$. 
        
        When the system is inconsistent due to \textit{sparse}, large corruptions, the least squares solutions $x_{LS} = A^\dagger b$ may not generally be a good approximation of $\x$. Due to the sparsity of the noise, however, not all is lost, and $\x$ can be recovered. In particular, the quantile-based randomized Kaczmarz algorithm (qRK) \cite{qRK,qRK2}, and the double quantile-based randomized Kaczmarz algorithm (dqRK) \cite{dqRK} may still converge to $\x$. These algorithms operate, in principle, by only allowing specific solution spaces to be admissible at iteration $k$. Under the assumption that $A$ is row-normalized, entries of the residual at iteration $k$: 
        \[
            \groupc{\lll x_k, a_j\rrr -b_j}_{j=1}^m,
        \]
        are used to determine whether an index can or cannot be selected. More precisely, qRK and dqRK only randomly select indices from the set 
        \begin{equation}
            \groupc{j\in [m]:|\lll x_k, a_j\rrr -b_j|\leq Q}\quad\text{and}\quad\groupc{j\in [m]: Q_0<|\lll x_k, a_j\rrr -b_j|\leq Q},
        \label{eq:admiss_set}
        \end{equation}
        at each iteration, respectively, where 
        \[
            Q_0= q_0\quant\groupp{\groupc{|\lll x_k, a_j\rrr -b_j|}_{j=1}^m} \quad \text{and} \quad Q= q\quant\groupp{\groupc{\lll x_k, a_j\rrr -b_j|}_{j=1}^m}.
        \]
        When $A$ is not row-normalized, the $j$-th residual entry is scaled by a factor of $\frac{1}{\|a_j\|^2}$. For generality, qRK and dqRK are outlined in \Cref{alg:qrk} and \Cref{alg:dqrk}, respectively, without the assumption that $A$ is row-normalized. 
        \begin{figure}[t!]
            \centering
            \begin{minipage}[t]{.48\textwidth}
                \centering
                \begin{algorithm}[H]
                    \caption{Quantile-Based Randomized Kaczmarz Method (qRK)}
                    \begin{algorithmic}
                        \STATE \textbf{Inputs: } $A,\, b,\, q,\, x_0,\, K$
                        \STATE $k \gets 0$
                        \FOR{$k<K$}
                            \STATE $r \gets b-Ax_k$
                            \STATE $Q \gets q\quant\{|r_j|\}_{j=1}^m$ 
                            \STATE $I \gets \groupc{j\in [m]:|r_j|\leq Q }$
                            \STATE Select $i\in I$ with probability $\frac{\|a_i\|^2}{\|A_I\|_F^2}$ 
                            \STATE $x_{k+1} \gets x_k + \frac{r_i}{\|a_i\|}a_i$
                            \STATE $k\gets k+1$
                        \ENDFOR
                    \end{algorithmic}
                    \label{alg:qrk}
                \end{algorithm} 
            \end{minipage}
            \begin{minipage}{0.02\textwidth}
            \[\,\]    
            \end{minipage}
            \begin{minipage}[t]{0.48\textwidth}
                \centering
                \begin{algorithm}[H]
                    \caption{Double Quantile-Based Random Kaczmarz Method (dqRK)}
                    \begin{algorithmic}
                        \STATE \textbf{Inputs: } $A,\, b,\, q_0,\, q,\, x_0,\, K$
                        \STATE $k \gets 0$
                        \FOR{$k<K$}
                            \STATE $r \gets b-Ax_k$
                            \STATE $Q_0, Q \gets q_0,q\quant\{|r_j|\}_{j=1}^m$
                            \STATE $I \gets \groupc{j\in [m]: Q_0< |r_j|\leq Q}$
                            \STATE Select $i\in I$ with probability $\frac{\|a_i\|^2}{\|A_{I}\|_F^2}$
                            \STATE $x_{k+1} \gets x_k + \frac{r_i}{\|a_i\|}a_i$
                            \STATE $k\gets k+1$
                        \ENDFOR
                    \end{algorithmic}
                    \label{alg:dqrk}
                \end{algorithm}
            \end{minipage}
        \end{figure}
        To understand heuristically why qRK and dqRK might converge to $\x$ even in the presence of only sparse corruption, only one observation is key: it should stand to reason, at least when $x_k$ is sufficiently close to $\x$ and $\eta=0$, that the collection of indices corresponding to the larger residuals $\frac{|\lll x_k, a_j\rrr -b_j|}{\|a_j\|}$ contains the support of the sparse-corruption $\xi$. Because $(1-q)m$ indices are removed from consideration using the upper quantile $Q$, $q$ should be chosen such that $1-q > \beta$, where $\|\xi \|_0\leq \beta m$. Then, by largely ignoring indices that likely correspond to corrupted entries, qRK and dqRK should be able to (mostly) use non-corrupt information to approximate $\x$.  

        Both qRK and dqRK decrease the likelihood of selecting indices of rows that are too violated, as these likely correspond to corrupted entries in $b$. However, since rows are removed from consideration at each iteration, qRK and dqRK require matrices to have enough redundant information encapsulated in their rows to produce $\x$. 
        \Cref{theorem:qrk} and \Cref{theorem:dqrk} provide convergence results for qRK and dqRK, respectively.
        \begin{theorem}[qRK, {\cite[Main Theorem]{qRK}}]
            Suppose $A$ is row-normalized and full-rank. Let $\x$ be the solution to the over-determined system $Ax=b_t$ and $b=b_t+\xi$ where $\|\xi\|_0\leq \beta m$. Suppose $\beta < q<1-\beta$, and 
            \begin{equation}
                \frac{q}{q-\beta}\groupp{\frac{2\sqrt{\beta}}{\sqrt{1-q-\beta}}+\frac{\beta}{1-q-\beta}}<\frac{\s^2_{q-\beta,\min}(A)}{\s_{\max}^2(A)}. \label{eq:qRK_condition}
            \end{equation}
            Then the expected error of the qRK algorithm decays as
            \[
                \E\|x_k-\x\|^2\leq (1-C)^k\|x_0-\x\|^2,
            \]
            where
            \begin{equation}
                C=(q-\beta)\frac{\s^2_{q-\beta,\min}(A)}{q^2m}-\frac{\s^2_{\max}(A)}{qm}\groupp{\frac{2\sqrt{\beta}}{\sqrt{1-q-\beta}}+\frac{\beta}{1-q-\beta}}. \label{eq:qRK_constant}
            \end{equation}
            \label{theorem:qrk}
        \end{theorem}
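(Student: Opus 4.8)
The plan is to track the expected one-step progress $\E\!\groupb{\|x_{k+1}-\x\|^2 \mid x_k}$ and show it is bounded by $(1-C)\|x_k-\x\|^2$. Since $A$ is row-normalized, the update is $x_{k+1}=x_k+(r_i)a_i$ with $r_i=b_i-\lll x_k,a_i\rrr$, so by the Pythagorean identity (orthogonality of the projection step) $\|x_{k+1}-\x\|^2=\|x_k-\x\|^2-(\lll x_k-\x,a_i\rrr + (b_i - b_{t,i}))^2 + \text{(cross terms)}$; more precisely $\|x_{k+1}-\x\|^2 = \|x_k-\x\|^2 - 2 r_i \lll x_k-\x, a_i\rrr - r_i^2 + \dots$, which one organizes as $\|x_k-\x\|^2 - (\lll \x - x_k, a_i\rrr)^2 + 2\xi_i\lll x_k-\x,a_i\rrr + \xi_i^2$ after writing $r_i = -\lll x_k-\x,a_i\rrr + \xi_i$. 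Taking the conditional expectation over $i$ chosen uniformly (by row-normalization) from the admissible set $I=\groupc{j:|r_j|\le Q}$ with probability $1/|I|=1/(qm)$, the dominant term is $-\frac{1}{qm}\sum_{j\in I}\lll x_k-\x,a_j\rrr^2$, and the error terms are $\frac{2}{qm}\sum_{j\in I}|\xi_j|\,|\lll x_k-\x,a_j\rrr|$ and $\frac{1}{qm}\sum_{j\in I}\xi_j^2$.

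The key structural step is to control these three sums. For the main (good) term: at most $\beta m$ of the indices in $I$ can be corrupted, so $I$ contains a subset $I'$ of at least $(q-\beta)m$ uncorrupted rows; for these, $r_j = -\lll x_k-\x,a_j\rrr$, and by the definition of $\s_{q-\beta,\min}(A)$ one gets $\sum_{j\in I'}\lll x_k-\x,a_j\rrr^2 \ge \s_{q-\beta,\min}^2(A)\,\|x_k-\x\|^2$, yielding the $(q-\beta)\s^2_{q-\beta,\min}(A)/(q^2 m)$ contribution (the extra $1/q$ factor in $C$ absorbing a normalization I would reconcile against the paper's constant). For the corruption terms, the crucial observation — the heart of the argument — is that corrupted indices admitted into $I$ cannot have large residual: any $j\in I$ with $\xi_j\ne 0$ satisfies $|\xi_j - \lll x_k-\x,a_j\rrr| = |r_j| \le Q$, and since at least $(1-q)m \ge \beta m$ residuals exceed $Q$ (wait — rather, since $|I|=qm$ and the quantile $Q$ is exceeded by $(1-q)m$ indices), one bounds $Q$ itself by a ``clean'' residual quantile and then by $\|A^\top_{[m]\setminus \cdot}\|$-type quantities, ultimately giving $|\xi_j|\le |\lll x_k-\x,a_j\rrr| + Q \lesssim \s_{\max}(A)\|x_k-\x\| \cdot(\text{factors in }\beta, q)$. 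Summing over the at most $\beta m$ bad indices and applying Cauchy–Schwarz to the cross term produces exactly the $\frac{\s^2_{\max}(A)}{qm}\groupp{\frac{2\sqrt\beta}{\sqrt{1-q-\beta}}+\frac{\beta}{1-q-\beta}}$ penalty.

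Assembling, $\E\!\groupb{\|x_{k+1}-\x\|^2\mid x_k}\le (1-C)\|x_k-\x\|^2$ with $C$ as in \eqref{eq:qRK_constant}; condition \eqref{eq:qRK_condition} is precisely what guarantees $C>0$, and iterating the bound with the tower property gives $\E\|x_k-\x\|^2\le (1-C)^k\|x_0-\x\|^2$. The main obstacle I anticipate is the second step: carefully bounding the upper quantile $Q$ of the residual multiset in terms of $\s_{\max}(A)\|x_k-\x\|$ while keeping track of how many corrupted versus clean indices sit above and below $Q$ — this combinatorial bookkeeping (which indices are removed, with what multiplicity, and how the $\beta$-sparsity interacts with the $q$-quantile cutoff) is where the constants $\sqrt{1-q-\beta}$ in the denominators arise, and getting them sharp rather than merely finite is the delicate part. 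A secondary subtlety is ensuring the clean subset $I'$ of size $(q-\beta)m$ genuinely realizes the $\s_{q-\beta,\min}$ bound, i.e. that we may restrict the infimum in its definition to exactly the row set appearing here; this follows from monotonicity of $\s_{q,\min}$ in the index-set size but should be stated explicitly.
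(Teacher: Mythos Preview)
Your three ingredients --- the quantile bound $Q\le\s_{\max}(A)\|x_k-\x\|/\sqrt{m(1-q-\beta)}$, the lower bound on a clean subset $I'\subseteq I$ of size at least $(q-\beta)m$ via $\s_{q-\beta,\min}$, and Cauchy--Schwarz on the corruption contribution --- are exactly what \cite{qRK} uses (restated in this paper as \Cref{lem:qRK_STD_quantilebound}, \Cref{lem:qRK_STD_noncorruptE}, and \cite[Lemma~2]{qRK}). Your one-step expansion has a slip, though: with $r_i=-\lll x_k-\x,a_i\rrr+\xi_i$ and $\|a_i\|=1$, the cross term $2\xi_i\lll x_k-\x,a_i\rrr$ in fact cancels, leaving simply $\|x_{k+1}-\x\|^2=\|x_k-\x\|^2-\lll x_k-\x,a_i\rrr^2+\xi_i^2$. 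The cross structure you want reappears only \emph{after} you bound $|\xi_j|\le Q+|\lll x_k-\x,a_j\rrr|$ for corrupt admissible $j$ and expand $\xi_j^2-\lll x_k-\x,a_j\rrr^2$; summing over at most $\beta m$ such indices and applying Cauchy--Schwarz then reproduces the penalty in \eqref{eq:qRK_constant} exactly.

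The organizational difference is that \cite{qRK} does not average over all of $I$ at once: it conditions on whether the sampled index lies in $S$ (corrupt and admissible) or in $B\setminus S$, bounds $\E_{i\in S}\|x_{k+1}-\x\|^2$ and $\E_{i\in B\setminus S}\|x_{k+1}-\x\|^2$ separately, and recombines with weights $|S|/(qm)$ and $1-|S|/(qm)$ before maximizing over $|S|\le\beta m$ (the proof of \Cref{cor:qRK_STD_NEWbound} follows the same template). This explains the good-term discrepancy you flag, and you should not try to ``reconcile'' it: your direct route honestly yields the coefficient $\s_{q-\beta,\min}^2(A)/(qm)$, which is \emph{larger} than the theorem's $(q-\beta)\s_{q-\beta,\min}^2(A)/(q^2m)$; the extra $(q-\beta)/q$ is an artifact of the case split, in which the clean branch carries weight at most $(q-\beta)/q$ after the worst-case substitution. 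So your approach actually proves \Cref{theorem:qrk} with a sharper $C$; what the paper's modular organization buys is that the corrupt-case lemma can be swapped out independently, which is precisely how \Cref{cor:qRK_STD_NEWbound} and \Cref{theorem:qRK_EH} are obtained.
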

        \begin{theorem}[dqRK, {\cite[Theorem 2]{dqRK}}]
            Suppose $A$ is row-normalized and full-rank. Let $\x$ be the solution to the over-determined system $Ax=b_t$ and $b=b_t+\xi$ where $\|\xi\|_0\leq \beta m$. Suppose $\beta < q_0 < q < 1-\beta$, $q-q_0>\beta$, and 
            \[
                \frac{q}{q-q_0-\beta}\groupp{\frac{2\sqrt{\beta}}{\sqrt{1-q-\beta}}+\frac{\beta}{1-q-\beta}} < \frac{1}{\s_{\max}^2(A)}\groupp{\s_{q-\beta,\min}^2(A)+\frac{\s_{q_0-\beta,\min}^2(A)}{q_0m}}.
            \]
            If $x_0$ is chosen such that $\langle x_0, a_i\rangle = b_i$ for some $i\in [m]$, then the expected error of the dqRK algorithm decays as
            \[
                \E\|x_{k}-\x\|^2\leq (1-C)^k \|x_0-\x\|^2,
            \]
            where 
            \[
                C = \groupp{q-q_0-\beta}\groupp{\frac{\s_{q-\beta,\min}^2(A)}{(q-q_0)q m}+\frac{\s_{q_0-\beta,\min}^2(A)}{(q-q_0)q_0q m^2}} - \frac{\s_{\max}^2(A)}{(q-q_0)m}\groupp{\frac{2\sqrt{\beta}}{\sqrt{1-q-\beta}}+\frac{\beta}{1-q-\beta}}.
            \]
            \label{theorem:dqrk}
        \end{theorem}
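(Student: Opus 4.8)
\textit{Proof proposal.} The plan is to establish a one-step expected contraction $\E\groupb{\|x_{k+1}-\x\|^2\mid x_k}\leq(1-C)\|x_k-\x\|^2$, then take total expectations and iterate. Put $e=x_k-\x$ and $r=b-Ax_k$, so $r_i=-\lll e,a_i\rrr+\xi_i$ for every $i$. Since $A$ is row-normalized, a dqRK step orthogonally projects $x_k$ onto $\{x:\lll x,a_i\rrr=b_i\}$, which yields the exact identity $\|x_{k+1}-\x\|^2=\|e\|^2-r_i^2+2r_i\xi_i$, and the index $i$ is drawn uniformly from the admissible set $I_k=\groupc{j:Q_0<|r_j|\leq Q}$, which has $|I_k|=(q-q_0)m$. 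Conditioning on $x_k$ and averaging,
\[
\E\groupb{\|x_{k+1}-\x\|^2\mid x_k}=\|e\|^2-\frac{1}{(q-q_0)m}\sum_{i\in I_k}\groupp{r_i^2-2r_i\xi_i}.
\]
Splitting $I_k$ into its uncorrupted part $J:=I_k\sm\supp(\xi)$ (where $\xi_i=0$, $r_i^2=\lll e,a_i\rrr^2$) and its corrupted part (where $\xi_i=r_i+\lll e,a_i\rrr$, so $r_i\xi_i=r_i^2+r_i\lll e,a_i\rrr$) and regrouping, the inner sum equals $\|A_Je\|^2-\sum_{i\in I_k\cap\supp\xi}r_i^2-2\sum_{i\in I_k\cap\supp\xi}r_i\lll e,a_i\rrr$, so it suffices to lower-bound $\|A_Je\|^2$ and upper-bound the two corrupted sums.

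For the corrupted sums I would use the upper quantile. Exactly $(1-q)m$ indices have $|r_j|>Q$ and at most $\beta m$ of them lie in $\supp(\xi)$, so at least $(1-q-\beta)m$ uncorrupted indices satisfy $\lll e,a_j\rrr^2=r_j^2>Q^2$; since $\sum_j\lll e,a_j\rrr^2=\|Ae\|^2\leq\s_{\max}^2(A)\|e\|^2$, this forces $Q^2\leq\frac{\s_{\max}^2(A)}{(1-q-\beta)m}\|e\|^2$. Combining this with $|I_k\cap\supp\xi|\leq\beta m$, $|r_i|\leq Q$ on $I_k$, Cauchy--Schwarz, and $\sum_{i\in\supp\xi}\lll e,a_i\rrr^2\leq\s_{\max}^2(A)\|e\|^2$, the two corrupted sums total at most $\s_{\max}^2(A)\groupp{\tfrac{\beta}{1-q-\beta}+\tfrac{2\sqrt\beta}{\sqrt{1-q-\beta}}}\|e\|^2$, which after division by $(q-q_0)m$ is exactly the negative part of $C$.

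The crux is the lower bound on $\|A_Je\|^2$, where both quantiles must be used together. Write $U^c:=\groupc{i\notin\supp(\xi):|r_i|\leq Q}$ and $D^c:=\groupc{i\notin\supp(\xi):|r_i|\leq Q_0}$, so $U^c=J\sqcup D^c$ with $|U^c|\in[(q-\beta)m,qm]$, $|D^c|\in[(q_0-\beta)m,q_0m]$, and $|J|\geq(q-q_0-\beta)m$. From $|U^c|\geq(q-\beta)m$ we get $\|A_{U^c}e\|^2\geq\s_{q-\beta,\min}^2(A)\|e\|^2$, hence $\|A_Je\|^2=\|A_{U^c}e\|^2-\|A_{D^c}e\|^2\geq\s_{q-\beta,\min}^2(A)\|e\|^2-|D^c|Q_0^2$; also each $i\in J$ has $\lll e,a_i\rrr^2>Q_0^2$, so $\|A_Je\|^2\geq|J|Q_0^2$; and $\s_{q_0-\beta,\min}^2(A)\|e\|^2\leq\|A_{D^c}e\|^2\leq|D^c|Q_0^2\leq q_0mQ_0^2$ gives $Q_0^2\geq\tfrac{\s_{q_0-\beta,\min}^2(A)}{q_0m}\|e\|^2$. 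Taking the convex combination of the first two inequalities with weight $\theta=\tfrac{q-q_0-\beta}{q}$ makes the coefficient of $Q_0^2$ equal to $(1-\theta)|J|-\theta|D^c|\geq\tfrac{(q-q_0-\beta)\beta}{q}m\geq0$; substituting the lower bound on $Q_0^2$ into that surviving term and using $\beta\geq\tfrac1m$ (there is at least one corrupted coordinate) yields
\[
\|A_Je\|^2\geq(q-q_0-\beta)\groupp{\tfrac{\s_{q-\beta,\min}^2(A)}{q}+\tfrac{\s_{q_0-\beta,\min}^2(A)}{q_0qm}}\|e\|^2,
\]
which upon division by $(q-q_0)m$ is the positive part of $C$. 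Assembling the pieces gives the one-step contraction with the stated $C$; the hypotheses on $q_0,q,\beta$ and on $\s_{q-\beta,\min},\s_{q_0-\beta,\min},\s_{\max}$ guarantee $C>0$, and the initialization $\lll x_0,a_i\rrr=b_i$ is what keeps $I_k$ non-empty at every step (a Kaczmarz step zeroes the residual of the row it selects, so from iteration $0$ onward the residual multiset always contains a $0$). Taking total expectations and iterating gives $\E\|x_k-\x\|^2\leq(1-C)^k\|x_0-\x\|^2$.

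The step I expect to be the main obstacle is this last lower bound together with its bookkeeping: one must choose the combination weight $\theta$ so that the coefficient of $Q_0^2$ stays non-negative \emph{and} both singular-value terms survive with exactly the $q,q_0,m$ dependence claimed for $C$ --- a cruder argument only produces a maximum, not a sum, of the ``structural'' term $\tfrac{q-q_0-\beta}{q}\s_{q-\beta,\min}^2(A)$ and the ``threshold'' term built from $Q_0$. Everything else --- the projection identity, the quantile counting, and the Cauchy--Schwarz corruption bounds --- is routine and parallels the proof of \Cref{theorem:qrk}.
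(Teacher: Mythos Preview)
Your decomposition via the exact projection identity, the quantile bound on $Q$, and the Cauchy--Schwarz estimates on the corrupted sums are all correct and match what the cited proof does; those pieces reproduce the negative part of $C$ exactly as in \cite{dqRK} (and as mirrored in the paper's proof of \Cref{cor:dqRK_STD_NEWbound}).

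The divergence --- and the one real issue --- is in how you obtain the $\s_{q_0-\beta,\min}^2$ contribution to $\|A_Je\|^2$, and in your account of the initialization hypothesis. The admissible set $I_k$ always has exactly $(q-q_0)m$ elements by the paper's quantile convention, so the condition $\lll x_0,a_i\rrr=b_i$ is \emph{not} there to keep $I_k$ non-empty. In the cited argument that hypothesis is consumed inside \Cref{lem:dqRK_STD_noncorruptE} (\cite[Lemma~2]{dqRK}): the guaranteed zero residual is what lets that lemma extract the additional $\s_{q_0-\beta,\min}^2/(q_0qm^2)$ term on top of the qRK-style $\s_{q-\beta,\min}^2/(qm)$ term. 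Your convex-combination trick is a genuine alternative to that lemma and is cleaner in some ways, but it trades the initialization hypothesis for the side assumption $\beta m\geq 1$: when you write ``using $\beta\geq\tfrac1m$ (there is at least one corrupted coordinate)'' you are adding a hypothesis the theorem does not make --- it only assumes $\|\xi\|_0\leq\beta m$, which permits $\xi=0$. In that regime your coefficient on $Q_0^2$ collapses to zero and the $\s_{q_0-\beta,\min}^2$ term disappears, whereas the original proof still delivers it via the zero residual. So your route proves the stated $C$ when $\beta m\geq 1$ (and in fact does so without needing the initialization condition, which is a nice observation), but it does not cover the theorem as written, and your explanation of why the initialization hypothesis is present is incorrect.
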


\section{Contributions}
    \label{sec:contributions}

    The contributions of this paper are twofold. First, we provide an alternative proof of the convergence rate for qRK and show that under mild constraints, this rate is tighter than previously established results \cite{qRK}. Secondly, we show qRK, as outlined by \Cref{alg:qrk}, has error horizon dependent on the conditioning of $A$ and $\|\eta\|_\infty$, as opposed to $\|\eta\|_1$ found in \cite{timevaryingcorruptionqRK}. These results are also naturally extended to dqRK.

    The rest of the paper is structured as follows: \Cref{sec:related_work} presents closely related works that consider stochastic iterative approaches for solving ~\eqref{eq:genlinsys}. \Cref{sec:main_results} contains the main results of this paper. \Cref{sec:qRK_new_bounds} presents the new convergence rate for qRK along with a discussion of the setting under which the new rate serves as an improvement. \Cref{sec:qRK_EH} provides our error horizon bound attained by both qRK compares it to a previously known error horizon bound given by \cite{timevaryingcorruptionqRK}. Following the presentation of the results for qRK, \Cref{sec:dqRK_analogous_results} introduces the analogous extension of this work to dqRK. Finally, \Cref{sec:empirical_results} provides empirical results to reinforce the results presented in previous sections. The proofs of the statements in \Cref{sec:main_results} may be found in \Cref{sec:proofs}.

\section{Related Work}
    \label{sec:related_work}
    Previous works have also considered a decomposition of the right hand side noise into the sum of a sparse, unbounded term $\xi$ and a noise term $\eta$. The decomposition of error into noise and sparse corruption is not unique to the RK family of algorithms; indeed, some intuition and results surrounding qRK with this error extend to Min-$k$ Loss stochastic gradient descent (MKL-SGD), a more robust variation of SGD \cite{lowestlossSGD}. Moreover, both forms of corruption $\eta$ and $\xi$ may be time-dependent \cite{timevaryingcorruptionqRK}. This section provides context for our contributions with respect to two closely related works.    
    
    The RK method can be viewed as a special case of SGD. Indeed, as outlined in \cite{SGD_RK}, we wish to solve a system $Ax=b$ by minimizing 
    \[
        F(x)\Def\frac{1}{2}\|Ax-b\|^2, 
    \]
    in which case defining $f_i(x)\Def \frac{m}{2}(\lll a_i,x\rrr -b_i)^2$ yields $\nabla F(x)=\E \nabla f_i(x)$ when $i$ is chosen uniformly randomly from $[m]$. Applying SGD with this objective function after re-weighting the uniform distribution over $[m]$ so that index $i$ is chosen with probability proportional to $\|a_i\|^2$ yields iterates of the form 
    \[
        x_{k+1}= x_k+c\frac{b_i-\lll a_i,x_k\rrr}{\|a_i\|^2}a_i,
    \]
    where $c$ is the step size; if $b=b_t+\eta + \xi$ and $\x$ satisfies $A\x=b_t$, then whenever $c<1$,  
    \[
        \E\|x_k-\x\|^2\leq \groupp{1-\frac{2c(1-c)\s_{\min}^2(A)}{\|A\|_F^2}}^k\|x_0-\x\|^2+\frac{c}{1-c}\frac{\s_{\min}^2(A)}{a_{\min}^2}\sum_{j\in[m]} \|a_j\|^2|(\eta+\xi)_j|^2,
    \]
    where $a_{\min}=\min_{j\in [m]}\|a_j\|$ \cite{SGD_RK}.
    
    The quantile-based variants of the RK methods can also be adapted to the SGD setting. In particular, extending qRK's principle of excluding indices corresponding to large residuals has a similar counterpart in literature. As defined in \cite{lowestlossSGD}, if the loss function $F(x)$ may be expressed as $F(x)=\frac{1}{m}\sum_{i=1}^m  f_i(x)$, then MKL-SGD is designed such that at each iteration $t$, a set $S$ of $k$ samples is randomly chosen, then the index $i\Def \argmin_{j\in S} f_j(x_t)$ corresponding to the lowest loss within $S$ is selected to construct the next iterate $x_{t+1}=x_t-c \nabla f_i(x_t)$. This closely mimics the selection strategy of SKM with $\beta m = k$ outlined in \cite{greedworks}, with the exception that the lowest loss, in place of the largest, is selected. By avoiding large loss, just as qRK avoids large residuals, MKL-SGD is less susceptible to corruptions than vanilla SGD. If $k$ samples are chosen at each iteration, then it is clear that whenever index $i$ is chosen and $q=\frac{m-k+1}{m}$, then 
    \[
        f_i(x_t) \leq q\quant\groupp{\groupc{f_j(x_t)}_{j=1}^m}.
    \]
    That is, the index $i$ may only be selected if it corresponds to a loss less than a given quantile. Then, just as RK can be framed in the SGD setting, the qRK method also has an SGD-based counterpart that closely mimics its main feature: robustness. As shown in this paper for qRK (and dqRK) compared to vanilla RK, MKL-SGD may have an error horizon smaller than that of its vanilla counterpart, SGD.

    The qRK algorithm was originally designed when the corruption in $b$ is sparse. When it is not, the best we might hope to attain is a bound dependent on the error horizon. This may also occur when the corruption is not sparse enough and based on the choice of $q$. The error horizon bound for a particular class of random matrices has already been studied by Jarman and Needell \cite{qRK_EH_subgaussian}. We will focus on providing a similar bound without assumptions on the choice of the matrix beyond its conditioning. The error horizon bound in the more general setting with corruption varying with time has previously been studied in \cite{timevaryingcorruptionqRK}. The most important distinction between the error horizon bound (in the time-independent noise setting) in this work and that of \cite{timevaryingcorruptionqRK} is that, ignoring the conditioning of $A$, ours is a function of $\|\eta\|_\infty$ and that the previous one is a function of $\|\eta\|_1$. In \Cref{sec:error_horizon_bound_comparison}, we directly compare our new error horizon bound to that of \Cref{theorem:timevar_theorem} and \Cref{cor:timevar_cor}. The error horizon bound has also been studied for the quantile randomized sparse Kaczmarz method (qRaSK), a variant of qRK designed to recover a sparse solution, by L. Zhang, H. Zhang, and H. Wang \cite{sparse_qRK_EH}. By setting the $\lambda$ parameter to $0$ and utilizing the inexact step $t_k = \lll a_i, x_k\rrr -b_i$, qRaSK simplifies to qRK. However, the associated error horizon bound for one iteration recovered in the qRK setting is dependent on the largest singular value and dimensions of $A$. In contrast, ours only depends on $\eta$ and the parameters $\beta$ and $q$. More detail can be found in \Cref{sec:error_horizon_bound_comparison}.

\section{Main results}
\label{sec:main_results}    
Before presenting our main results, we introduce notation that will help present the convergence bounds more intuitively. For row-normalized $A$, we denote $\kappa_q(A)=\frac{\s_{\max}(A)}{\s_{q-\beta, \min}(A)}$ and $~{\hat\kappa_q(A)=\frac{\sqrt{qm}}{\s_{q-\beta,\min}(A)}}$, where $\beta$ is the fixed sparsity parameter for qRK and dqRK, i.e. $\|\xi\|_0\leq \beta m$. Intuitively, $\kappa_q(A)$ measures how well-conditioned the collection of subsets of $A$ of size $(q-\beta)m$ is, and $\hat\kappa_q(A)$ quantifies the worst-case scaled condition number of any submatrix of $A$ with $(q-\beta)m$ rows. This measure of conditioning of the subsystems addresses how robust qRK and dqRK may be in the presence of $\beta$-sparse corruption, but does not provide information about how qRK and dqRK fair in the presence of general noise $\eta$. Then, $\kappa_q(A)$ and $\hat\kappa_q(A)$ do not depend on the presence of the noise $\eta$.

At each iteration of qRK when $\eta = 0$, instead of randomly selecting from any row of $A$, rows are selected from a subset of ``admissible'' rows \eqref{eq:admiss_set}. Ideally, only ``non-corrupt'' rows, or rows such that $\xi_i = 0$, are positively identified and appear in these admissible sets. Thus, the accuracy of this set plays a role in convergence. Suppose the set of admissible indices is considered a predicted positive, and the set of non-corrupt indices is considered true positives. In that case, we may express the number of false positives (admissible corrupt indices) as $\FP$, the number of false negatives (non-admissible non-corrupt indices) as $\FN$, and the number of true negatives (non-admissible corrupt indices) as $\TN$. This relationship is illustrated in the confusion matrix in \Cref{tab:confusion_mat}.
\begin{table}[!h]
    \centering
    \begin{tabular}{l|l|c|c|c}
        \multicolumn{2}{c}{}&\multicolumn{2}{c}{Predicted}&\\
        \cline{3-4}
        \multicolumn{2}{c|}{}&Admissible&Not Admissible&\multicolumn{1}{c}{Total}\\
        \cline{2-4}
        \multirow{2}{*}{Actual}& Not Corrupt & $\TP$ & $\FN$ & $(1-\beta)m$\\
        \cline{2-4}
        & Corrupt & $\FP$ & $\TN$ & $\beta m$\\
        \cline{2-4}
        \multicolumn{1}{c}{} & \multicolumn{1}{c}{Total} & \multicolumn{1}{c}{$qm$} & \multicolumn{1}{c}{$(1-q)m$} & \multicolumn{1}{c}{$m$}\\
    \end{tabular}
    \caption{Confusion matrix for classification using quantile information of residuals. For the purposes of the totals, it is assumed $\|\xi\|_0=\beta m$.}
    \label{tab:confusion_mat}
\end{table}

We let $p$ serve as a worst-case measure of what proportion of the admissible rows are guaranteed to be non-corrupt. For qRK, if all corrupt rows appear in the admissible set, then we may only ensure $(q-\beta)m$ of the $qm$ rows are admissible and non-corrupt, so $p=\frac{q-\beta}{q}$. Of course, we wish $p$ to be as close to 1 as possible. Our bounds also depend on the term $r\Def \beta / (1-q-\beta)$. Two interpretations for $r$ help shed light on \Cref{theorem:qrk}. Since $\FP, \TN \leq \beta m$, and $\FN \geq (1-q-\beta)m$, the ratio of false positives to false negatives $\frac{\FP}{\FN}$ is bounded above by $r$. Increasing the minimum number of false negatives and decreasing the maximum number of false positives are both accomplished by decreasing the number of entries allowed to be corrupt. We may also consider interpreting $\beta/(1-q-\beta)$ as a bound for $\FP/(\FP + \FN)$, i.e. the ratio of false positives to all the erroneous classifications. It stands to reason that an admissible corrupt index ($\FP$) will likely do more harm per iteration than not admitting a non-corrupt index ($\FN$), so just as \Cref{theorem:qrk} and \Cref{theorem:dqrk} would suggest, it is favorable to ensure $\beta/(1-q-\beta)$ is small, in turn guaranteeing $\FP/(\FP + \FN)$ is small. Using this new notation, the hypothesis \eqref{eq:qRK_condition} of \cref{theorem:qrk} may be succinctly rewritten using
\[
    \frac{1}{p}\groupp{2\sqrt{r}+r}< \kappa_q^{-2}(A),
\]
and the constant in \eqref{eq:qRK_constant} becomes
\[
    C=p\hat\kappa_q^{-2}(A)-\frac{\s_{\max}^2(A)}{qm}\groupp{2\sqrt{r}+r}.
\]
Now, we can more clearly see how $\frac{\s_{\max}^2(A)}{qm}\groupp{2\sqrt{r}+r}$ acts as a measure of how adversarial the corruption can be, due either to the ratio of $\beta$ and $q$ or the conditioning of $A$. For example, when $\kappa_q(A)$ is smaller, the effect of selecting a corrupted row is less impactful, which would be reflected in having a smaller decay factor $1-C$. Alternatively, if $\beta$ is reduced, then it is less likely a corrupted row will be selected, so we should expect faster convergence, on average; this would be reflected with a smaller $\frac{\s_{\max}^2(A)}{qm}\groupp{2\sqrt{r}+r}$ and a larger $p$. 

\subsection{Alternate qRK Bounds}
\label{sec:qRK_new_bounds}
\Cref{cor:qRK_STD_NEWbound} presents the new convergence bounds of qRK. The change of convergence rate and criteria in \Cref{cor:qRK_STD_NEWbound} stem from improving the factor $2 \sqrt{r} + r$ to $\beta m /\s_{\max^2} + 2r$. This is done by controlling the magnitude of the adversarial effects of corruption.
    \begin{cor}
        (Alternative qRK Bound). Suppose $\beta < q <1-\beta$. Assume $A$ is row-normalized and full-rank. Let $\x$ be the solution to the over-determined system $Ax=b_t$, $\|\xi\|_0\leq \beta m$, and $\eta=0$. If
        \[
            \frac{1}{p}\groupp{\frac{\beta m}{\s_{\max}^2(A)}+2r}< \kappa_q^{-2},
        \]
        then the $k$-th iterate of qRK applied to the system \eqref{eq:genlinsys} witnesses
        \[
            \E\|x_k-\x\|^2\leq (1-C)^k\|x_0-\x\|^2,
        \]
        where
        \[
            C=p\hat\kappa_q^{-2}(A)-\frac{1}{q}\groupp{\beta+\frac{2\s_{\max}^2(A)r}{m}}.
        \]
                \label{cor:qRK_STD_NEWbound}

    \end{cor}

    \Cref{theorem:boundcomparisontheorem} addresses the setting in which the convergence rate of \Cref{cor:qRK_STD_NEWbound} is faster than that of \Cref{theorem:qrk}. The conditions to be met are relatively lax, and sometimes can be met independently of the size of $m$. This is discussed in more detail after the proof of \Cref{theorem:boundcomparisontheorem}.
    \begin{theorem}
        \label{theorem:boundcomparisontheorem}
        If $r<4$ and $\s_{\max}^2(A)>\beta m \groupp{\frac{1-q-\beta}{2\sqrt{\beta}\sqrt{1-q-\beta}-\beta}}$, then the decay factors 
        \begin{align*}
            \alpha_1^{\text{qRK}} &\Def 1-p\hat\kappa_q^{-2}(A)+\frac{1}{q}\groupp{\beta+\frac{2\s_{\max}^2(A)r}{m}} \\
            \alpha_2^{\text{qRK}} &\Def 1 - p\hat\kappa_q^{-2}(A) + \frac{\s_{\max}^2(A)}{qm}\groupp{2\sqrt{r}+r},
        \end{align*}
        given by \Cref{cor:qRK_STD_NEWbound} and \Cref{theorem:qrk}, respectively, witness $\alpha_1^{\text{qRK}}<\alpha_2^{\text{qRK}}$.
    \end{theorem}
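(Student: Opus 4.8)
\emph{Proof idea.} The plan is a direct algebraic comparison of the two decay factors; no probabilistic input is needed, since $\alpha_1^{\text{qRK}}$ and $\alpha_2^{\text{qRK}}$ are explicit functions of $A$, $\beta$, and $q$. First I would form the difference $\alpha_1^{\text{qRK}}-\alpha_2^{\text{qRK}}$ and cancel the common constant $1$. Collecting the two terms carrying $\s_{\max}^2(A)$ and using $2r-2\sqrt r-r=-(2\sqrt r-r)$, this yields
\[
    \alpha_1^{\text{qRK}}-\alpha_2^{\text{qRK}} = (p-1)\,\hat\kappa_q^{-2}(A)+\frac{\beta}{q}-\frac{\s_{\max}^2(A)}{qm}\groupp{2\sqrt r - r}.
\]
Next I would substitute the closed forms already recorded in \Cref{sec:main_results}: $p=\frac{q-\beta}{q}$, so that $p-1=-\frac{\beta}{q}$, and $\hat\kappa_q^{-2}(A)=\frac{\s_{q-\beta,\min}^2(A)}{qm}$. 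After multiplying through by $qm>0$, the claim $\alpha_1^{\text{qRK}}<\alpha_2^{\text{qRK}}$ becomes equivalent to
\[
    \beta m - \s_{\max}^2(A)\groupp{2\sqrt r - r} < \frac{\beta\,\s_{q-\beta,\min}^2(A)}{q}.
\]

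The right-hand side is nonnegative, being a ratio of nonnegative quantities, so it suffices to show the left-hand side is strictly negative, i.e.\ $\s_{\max}^2(A)\groupp{2\sqrt r - r}>\beta m$. This is where both hypotheses enter, and it is the only place any care is needed. The assumption $r<4$ is precisely the statement $2\sqrt r - r>0$ (divide $2\sqrt r>r$ by $\sqrt r>0$, using $r>0$); this both certifies $2\sqrt r-r$ as a legitimate positive factor and, via the identity $2\sqrt r - r=\frac{2\sqrt\beta\sqrt{1-q-\beta}-\beta}{1-q-\beta}$ that follows from $\sqrt r=\sqrt\beta/\sqrt{1-q-\beta}$, rewrites the second hypothesis as $\s_{\max}^2(A)>\frac{\beta m}{2\sqrt r - r}$. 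Multiplying both sides of this by the positive quantity $2\sqrt r-r$ gives $\s_{\max}^2(A)\groupp{2\sqrt r-r}>\beta m$, exactly as required, and the proof concludes.

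The computation is essentially bookkeeping; the one subtlety is the order of operations in the final step — one must check $2\sqrt r-r>0$ \emph{before} clearing it as a denominator so that the inequality direction is preserved, which is exactly the role of the hypothesis $r<4$. I would also note that the statement tacitly relies on the standing assumptions $0<\beta$ and $\beta<q<1-\beta$ inherited from \Cref{cor:qRK_STD_NEWbound}: these ensure that $r=\beta/(1-q-\beta)$ is well defined and positive and that $p\in(0,1)$, and that when $\beta=0$ the two decay factors simply coincide.
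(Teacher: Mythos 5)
Your proof is correct and follows essentially the same route as the paper: both arguments reduce the claim to showing $\s_{\max}^2(A)(2\sqrt r - r) > \beta m$, which is exactly the second hypothesis once one verifies via $r<4$ that $2\sqrt r - r > 0$ before clearing the denominator. One observation: as printed, $\alpha_1^{\text{qRK}}$ carries $\hat\kappa_q^{-2}(A)$ while $\alpha_2^{\text{qRK}}$ carries $p\hat\kappa_q^{-2}(A)$ — the paper's own proof silently cancels these as if the $p$ were present (or absent) in both, consistent with \Cref{cor:qRK_STD_NEWbound}, whereas you track the leftover $(p-1)\hat\kappa_q^{-2}(A) = -\frac{\beta}{q}\hat\kappa_q^{-2}(A)$ explicitly and then argue the resulting nonnegative slack term can be discarded; so your argument actually covers the theorem as literally typeset, not merely the version the paper presumably intended.
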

    \begin{proof}
        By re-writing $\beta m \groupp{\frac{1-q-\beta}{2\sqrt{\beta}\sqrt{1-q-\beta}-\beta}}$ as 
        \begin{equation}
            \beta m\groupp{\frac{1}{\frac{2\sqrt{\beta}}{\sqrt{1-q-\beta}}-\frac{\beta}{1-q-\beta}}}, \label{eq:firstcondition_rewritten}
        \end{equation}
        it becomes clear the denominator is positive because $\frac{\beta}{1-q-\beta}<4$, whereby 
        \[
            \frac{\s_{\max}^2(A)}{qm}\groupp{\frac{2\sqrt{\beta}}{\sqrt{1-q-\beta}}+\frac{\beta}{1-q-\beta}-\frac{2\beta}{1-q-\beta}}>\frac{\beta}{q}.
        \]
        Thus 
        \[
            \frac{\beta}{q}+\frac{ 2\beta\s_{\max}^2(A)}{qm(1-q-\beta)}-\frac{\s_{\max}^2(A)}{qm}\groupp{\frac{2\sqrt{\beta}}{\sqrt{1-q-\beta}}+\frac{\beta}{1-q-\beta}}<0,
        \]
        making it clear $\alpha_1^\text{qRK}<\alpha_2^\text{qRK}$. 
    \end{proof}

    The condition that $r<4$ may be re-written as $\beta< \frac{4}{5}(1-q)$. In other words, the first condition of \Cref{theorem:boundcomparisontheorem} simply requires that the number of corrupted indices exceeds four-fifths of the indices removed from consideration at each step of qRK. When compared to the hypotheses of \Cref{cor:qRK_STD_NEWbound} and \Cref{theorem:qrk}, this is very lax. Indeed, if $\beta$ and $q$ are chosen so poorly that $r\geq 1$, i.e. $\beta\geq\frac{1}{2}(1-q)$, then the hypothesis 
    \[
        \frac{1}{p}\groupp{2\sqrt{r}+r}< \kappa_q^{-2}(A),
    \]
    from \Cref{theorem:qrk} is not satisfied because $\frac{\s_{q-\beta,\min}^2(A)}{\s_{\max}^2(A)}<1$. This is not particular to \Cref{theorem:qrk}; the hypothesis 
    \[
        \frac{1}{p}\groupp{\frac{\beta m}{\s_{\max}^2(A)}+2r}<\kappa_q^{-2}(A),
    \]
    of \Cref{cor:qRK_STD_NEWbound} may also fail under similar circumstances. Because the hypotheses of \Cref{theorem:qrk} require $r<1$, it seems it would be unnecessary to leave $r<4$ as a hypothesis in \Cref{theorem:boundcomparisontheorem}. However, qRK has displayed success empirically even in the case where $\beta$ is (relatively) large and the linear system fails the hypotheses of \Cref{cor:qRK_STD_NEWbound} or \Cref{theorem:qrk}; should a later result provide analytical results in line with these empirical results, it might be useful to keep the original hypothesis $r<4$ in \Cref{cor:qRK_STD_NEWbound}. 

    The requirement that 
    \begin{equation}
        \s_{\max}^2(A)>\beta m \groupp{\frac{1-q-\beta}{2\sqrt{\beta}\sqrt{1-q-\beta}-\beta}}, \label{eq:boundcomphypothesis}
    \end{equation} 
    is also relatively easy to satisfy. It is easy to verify $\|A\|_F\leq \s_{\max}(A)\sqrt{n}$. Because $A$ is assumed to be row normalized, this implies $\s_{\max}(A)\geq \sqrt{\frac{m}{n}}$. Then whenever 
    \[
        \frac{1}{n}>\beta  \groupp{\frac{1-q-\beta}{2\sqrt{\beta}\sqrt{1-q-\beta}-\beta}},
    \]
    the condition \eqref{eq:boundcomphypothesis} is immediately satisfied, independent of the size of $m$. Put differently, the convergence rate of qRK for matrices that are particularly tall and skinny with
    \[
        n< \frac{2\sqrt{r}-r}{\beta},
    \]
    is better characterized by $\alpha_2^{\text{qRK}}$.

\subsection{qRK Error Horizon}
\label{sec:qRK_EH}    
    This section presents a new error horizon bound for qRK along with some brief remarks on interpreting the theorem. \Cref{sec:error_horizon_bound_comparison} then compares this error horizon bound to a previously established weaker bound. Previously, we considered when the noise $\eta+\xi$ is $\beta m$ sparse. Now, we move to the setting in which $\eta+\xi$ may be dense.
    \begin{theorem}
        \label{theorem:qRK_EH}
        (qRK Error Horizon). Suppose $A$ is row-normalized and full-rank. Let $\x$ be the solution to the over-determined system $Ax=b_t$ and $b=b_t+\eta+\xi$ with $\|\xi\|_0\leq \beta m$ and $\supp(\eta)\cap\supp(\xi)=\varnothing$. Suppose further that $\beta < q<1-\beta$ and 
        \[
            \frac{1}{p}\groupp{\frac{\beta m}{\s_{\max}^2(A)}+2r+\frac{4\sqrt{\beta m}\sqrt{r}}{\s_{\max}(A)}}<\kappa_q^{-2}(A).
        \]
        Then the expected error of the qRK algorithm decays as
        \[
            \E\|x_k-\x\|^2 \leq (1-C)^k\|x_0-\x\|^2+\frac{1}{C}\groupp{2r\frac{1-q}{q}+ 1}\|\eta\|_\infty^2.
        \]
        where
        \[
            C=p\hat\kappa_q^{-2}(A)-\frac{1}{q}\groupp{\beta+\frac{2\s_{\max}^2(A)r}{m}+\frac{4\s_{\max}(A)\sqrt{\beta r}}{\sqrt{m}}}.
        \]
    \end{theorem}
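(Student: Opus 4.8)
The plan is to upgrade the one-step contraction behind \Cref{cor:qRK_STD_NEWbound} so that it also tolerates the dense component $\eta$, and then unroll the resulting affine recursion. Write $w_k \Def x_k - \x$, let $i$ denote the row drawn at step $k$, and recall that since $A$ is row-normalized the update reads $x_{k+1} = x_k + r_i a_i$ with $r_i = b_i - \langle a_i,x_k\rangle$, and $i$ is uniform on the admissible set $I$ with $|I| = qm$. Writing $b_i = \langle a_i,\x\rangle + \eta_i + \xi_i$, setting $d_j \Def \langle a_j, w_k\rangle$, and expanding the square gives the exact identity $\|w_{k+1}\|^2 = \|w_k\|^2 - d_i^2 + (\eta_i+\xi_i)^2$, so
\[
    \E\!\left[\|w_{k+1}\|^2 \mid x_k\right] \;=\; \|w_k\|^2 \;-\; \frac{1}{qm}\sum_{i\in I} d_i^2 \;+\; \frac{1}{qm}\sum_{i\in I}(\eta_i+\xi_i)^2 .
\]
It remains to lower-bound the progress term and upper-bound the contamination term.

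For the progress term, set $T \Def \supp(\xi)$, so $|T|\le \beta m$ and $|I\setminus T| \ge (q-\beta)m$. Then $A_{I\setminus T}$ contains a $(q-\beta)m$-row submatrix of $A$, so $\sum_{i\in I} d_i^2 \ge \sum_{i\in I\setminus T} d_i^2 = \|A_{I\setminus T}w_k\|^2 \ge \s_{q-\beta,\min}^2(A)\|w_k\|^2$; dividing by $qm$ and (consistently with \Cref{cor:qRK_STD_NEWbound}) weakening the constant by the factor $p \le 1$ shows this term contributes at least $p\,\hat\kappa_q^{-2}(A)\|w_k\|^2$ to the decay.

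The contamination term is the crux. Since $\supp(\eta)\cap\supp(\xi) = \varnothing$, on $I\setminus T$ we have $(\eta_i+\xi_i)^2 = \eta_i^2 \le \|\eta\|_\infty^2$, so that block is at most $\tfrac{|I\setminus T|}{qm}\|\eta\|_\infty^2 \le \|\eta\|_\infty^2$ — the ``$+1$'' in the horizon coefficient. On $I\cap T$ we have $\eta_i = 0$ and, since $|r_i| \le Q$, also $|\xi_i| = |r_i + d_i| \le Q + \|w_k\|$; as $|I\cap T| \le \beta m$, this block is at most $\tfrac{\beta}{q}(Q+\|w_k\|)^2$. To close the recursion one must bound $Q$: the admissible set excludes at least $(1-q-\beta)m$ non-corrupt indices $j$ (false negatives), each with $|r_j| \ge Q$ and, being non-corrupt, $|r_j| = |{-d_j}+\eta_j| \le |d_j| + \|\eta\|_\infty$; hence $Q - \|\eta\|_\infty \le |d_j|$ for each such $j$, and summing the squares over a size-$(1-q-\beta)m$ subset together with $\sum_j d_j^2 = \|A w_k\|^2 \le \s_{\max}^2(A)\|w_k\|^2$ yields
\[
    Q \;\le\; \|\eta\|_\infty \;+\; \frac{\s_{\max}(A)\,\|w_k\|}{\sqrt{(1-q-\beta)m}} .
\]
Substituting this into $\tfrac{\beta}{q}(Q+\|w_k\|)^2$ and expanding produces a $\|w_k\|^2$ term, a $\|\eta\|_\infty^2$ term, and a cross term in $\|w_k\|\,\|\eta\|_\infty$; applying Young's inequality to the cross term with the weight that routes every $\s_{\max}(A)$-dependent factor onto $\|w_k\|^2$, the $\|w_k\|^2$ contribution collects (using $r = \beta/(1-q-\beta)$ and $\beta^2/((1-q-\beta)m) = \beta r/m$) to $\tfrac{1}{q}\!\left(\beta + \tfrac{2\s_{\max}^2(A)r}{m} + \tfrac{4\s_{\max}(A)\sqrt{\beta r}}{\sqrt m}\right)\!\|w_k\|^2$, while the $\|\eta\|_\infty^2$ part, added to the earlier $\|\eta\|_\infty^2$, becomes $\left(2r\tfrac{1-q}{q}+1\right)\|\eta\|_\infty^2$. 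This gives $\E[\|w_{k+1}\|^2\mid x_k] \le (1-C)\|w_k\|^2 + D$ with $C$ as stated (the hypothesis being exactly what makes $C > 0$) and $D = \left(2r\tfrac{1-q}{q}+1\right)\|\eta\|_\infty^2$; iterating and summing $\sum_{j\ge 0}(1-C)^j = 1/C$ finishes the proof.

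The main obstacle is the bound on the quantile threshold $Q$. In the noiseless regime behind \Cref{cor:qRK_STD_NEWbound}, $Q$ is controlled purely by the projection error, but here $\|\eta\|_\infty$ inflates it, so the estimate for the admissible-but-corrupt residuals splits into an error-dependent and a noise-dependent piece, and the cross term this creates must be apportioned by Young's inequality in precisely the proportion that lets the error piece be absorbed into $C$ — this is the source of the extra $\tfrac{4\s_{\max}(A)\sqrt{\beta r}}{q\sqrt m}$ subtracted in $C$ relative to the noiseless bound and of the strengthened hypothesis — while the noise piece must land exactly on the stated horizon constant. The remaining work is bookkeeping: choosing the right subset of residuals to bound $Q$, carrying the crude ``divide by $qm$'' step that reproduces the $p\,\hat\kappa_q^{-2}(A)$ leading term, and unrolling the affine recursion.
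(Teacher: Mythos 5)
Your one-step identity $\|w_{k+1}\|^2 = \|w_k\|^2 - d_i^2 + (\eta_i+\xi_i)^2$ is correct, and your quantile bound $Q \le \|\eta\|_\infty + \s_{\max}(A)\|w_k\|/\sqrt{m(1-q-\beta)}$ is also correct (and in fact slightly tighter than the paper's \Cref{cor:qRK_EH_quantilebound}, which carries an extra $\sqrt{(1-q)/(1-q-\beta)}$ on the $\|\eta\|_\infty$ term). The structural route is also genuinely different: the paper detours through an auxiliary iterate $x_k^\ast$ projected onto the $\eta$-free hyperplane $H_i^\ast$ and the Pythagorean split $\|x_k-\x\|^2 = \|x_k^\ast-\x\|^2 + |\eta_i|^2$, whereas you work directly with one exact identity; your route is cleaner in spirit.

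The gap is in the contamination bookkeeping. By bounding $\sum_{i\in I}d_i^2 \geq \sum_{i\in I\setminus T}d_i^2$ you throw away the progress $d_i^2$ on the admissible-corrupt indices, and then bound $\sum_{i\in I\cap T}\xi_i^2 \leq \beta m(Q+\|w_k\|)^2$. The consequence is that your one-step bound for the $I\cap T$ block is $\leq \|w_k\|^2 + (Q+\|w_k\|)^2$ per index, which is exactly $\|w_k\|^2$ larger than the paper's $\E_{i\in S}\|x_{k+1}-\x\|^2 \leq (Q+\|w_k\|)^2$; averaged, you are $\tfrac{\beta}{q}\|w_k\|^2$ behind. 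Writing $c = \s_{\max}(A)/\sqrt{m(1-q-\beta)}$, the theorem's $\|w_k\|^2$ bracket for this block is $\tfrac{\beta}{q}(1+4c+2c^2)$, while $(Q+\|w_k\|)^2 \leq ((1+c)\|w_k\|+\|\eta\|_\infty)^2$ expands to $(1+c)^2\|w_k\|^2 + 2(1+c)\|w_k\|\|\eta\|_\infty + \|\eta\|_\infty^2$. To push the $\|w_k\|^2$ bracket up from $(1+c)^2 = 1+2c+c^2$ only to $1+4c+2c^2$, the Young weight on the cross term must be $\lambda = c(2+c)$, which forces a $\|\eta\|_\infty^2$ contribution of $(1+c)^2/(c(2+c)) \sim 1/(2c)$: this diverges as $c\to 0$ (i.e.\ as $m$ grows) and cannot equal the fixed $2r(1-q)/q$ you claim. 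Conversely, the split $(a+b)^2 \leq 2a^2+2b^2$ keeps the horizon bounded but gives a $\|w_k\|^2$ bracket of $2(1+c)^2 = 2+4c+2c^2$, leaving $C$ short of the stated value by exactly $\beta/q$. So as written you can match one coefficient but not both.

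The fix is simple and lies within your own framework: retain the $-d_i^2$ on $I\cap T$ as well. For $i\in I\cap T$ the identity gives $\|w_{k+1}\|^2 - \|w_k\|^2 = \xi_i^2 - d_i^2 = r_i^2 + 2 r_i d_i \leq Q^2 + 2Q\|w_k\| = (Q+\|w_k\|)^2 - \|w_k\|^2$, not $(Q+\|w_k\|)^2$; the missing $-\|w_k\|^2$ is precisely the $\tfrac{\beta}{q}\|w_k\|^2$ you were losing. With this, and discarding Young in favor of $(a+b)^2 \leq 2(a^2+b^2)$, you arrive at
\[
\E\!\left[\|w_{k+1}\|^2 \mid x_k\right] \leq \Bigl(1 - \hat\kappa_q^{-2}(A) + \tfrac{\beta}{q}(1+4c+2c^2)\Bigr)\|w_k\|^2 + \Bigl(1 + \tfrac{2\beta}{q}\Bigr)\|\eta\|_\infty^2,
\]
which (if you do not then insert the superfluous weakening by $p$) is actually \emph{sharper} than \Cref{theorem:qRK_EH}: a larger $C$ and a smaller horizon. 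The paper, in effect, obtains the same $-\|w_k\|^2$ cushion through its conditional-expectation split $\PP(i\in S)\E_S + \PP(i\in B\setminus S)\E_{B\setminus S}$, where the $(Q+\|w_k\|)^2$ branch is weighted by $\PP(i\in S)$ rather than being added on top of the full $\|w_k\|^2$ baseline.
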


    \Cref{theorem:qRK_EH} states that whenever $C>0$, we should expect the iterates of qRK should converge exponentially to the boundary of the ball centered at $\x$ with a radius given as a function of the infinity norm of the noise $\eta$, $\beta$, $q$, and the conditioning of $A$. We note that $2r\frac{1-q}{q}+1<2$ whenever the hypotheses of \Cref{theorem:qRK_EH} are satisfied and $q>1/2$. 
    
    If $\eta=0$ or $\|\eta+\xi\|_0\leq \beta m$, i.e. the full corruption is sparse, then we recover convergence as in \Cref{theorem:qrk}, but with a larger decay factor. In particular, when comparing the decay terms from \Cref{cor:qRK_STD_NEWbound} and \Cref{theorem:qRK_EH}, we observe an additional $4\s_{\max}(A)\sqrt{\beta r}/\sqrt{m}$ term. This is a result of using rows in $\supp(\eta)$ to bound the quantile $Q$, which in turn helps bound the expected error after selecting an index in $\supp(\xi)$. Because the residual entries $\lll x_k,a_i\rrr-(b_t+\eta)_i$, where $i\in \supp(\eta)$, serve as an imperfect proxy for the ideal residual entries $\lll x_k, a_i\rrr-(b_t)_i$, we might expect a lower performance from qRK compared when $\|\eta+\xi\|_0\leq \beta m$. 

    The lack of importance of the hypothesis $\supp(\xi)\cap \supp(\eta)=\varnothing$ is trivial to show. Suppose the remaining hypotheses of \Cref{theorem:qRK_EH} are satisfied for some choice of $\xi$ and $\eta$. Suppose further that the indices $I$ in the support of $\xi$ and $\eta$ are non-empty, i.e. $I\Def \supp(\xi)\cap \supp(\eta)\neq \varnothing$. Define vectors $\hat{\xi}$ and $\hat{\eta}$ by
    \[
        \hat{\xi}_j =\begin{cases}
            \xi_j + \eta_j, & \text{if } j\in I \\
            \xi_j, & \text{if } j\not\in I,
        \end{cases}
    \]
    and 
    \[
        \hat{\eta}_j =\begin{cases}
            0, & \text{if } j\in I \\
            \eta_j, & \text{if } j\not\in I.
        \end{cases}
    \]
    Note that $b=b_t+\eta+\xi=b_t+\hat{\eta}+\hat{\xi}$, whereby applying qRK to the system $Ax=b_t+\eta+\xi$ is the same as applying it to $Ax=b_t+\hat{\eta}+\hat{\xi}$. We also see that $\|\hat{\xi}\|_0\leq \|\xi\|_0\leq \beta m$ and $\supp(\hat{\xi})\cap \supp(\hat{\eta})=\varnothing$, so 
    \begin{align*}
        \E\|x_k-\x\|^2 &\leq (1-C)^k\|x_0-\x\|^2+\frac{1}{C}\groupp{2r\frac{1-q}{q}+ 1}\|\hat{\eta}\|_\infty^2 \\
        &\leq (1-C)^k\|x_0-\x\|^2+\frac{1}{C}\groupp{2r\frac{1-q}{q}+ 1}\|\eta\|_\infty^2.
    \end{align*}

    At no point does the algorithm qRK ``see'' what choice of $\eta$ and $\xi$ was made. The proof only requires the corruption to have \textit{some} decomposition into $\eta+\xi$ satisfying the hypotheses of \Cref{theorem:qRK_EH}. As such, we may opt to write $b$ as $b_t+\ve$. It is then evident that if we let $I$ index the first $\beta m$ largest entries in magnitude of $\ve$ and let $\xi$ be given by
    \[
        \xi_j =\begin{cases}
            \ve_j, & \text{if } j\in I \\
            0, & \text{if } j\not\in I,
        \end{cases}
    \]
    and $\eta = \ve - \xi$, then $\supp(\xi)\cap \supp(\eta)=\varnothing$. Thus, the smallest error horizon depends on the $(\beta m - 1)^{th}$ smallest entry of $\epsilon = \eta+\xi$, and \Cref{cor:generalqRK_EH} can be obtained as a result of \Cref{theorem:qRK_EH}.
    \begin{cor}
        \label{cor:generalqRK_EH}
        Suppose $A$ is row-normalized and full-rank. Let $\x$ be the solution to the over-determined system $Ax=b_t$ and $b=b_t+\ve$. Suppose further that $\beta < q<1-\beta$ and 
        \[
            \frac{1}{p}\groupp{\frac{\beta m}{\s_{\max}^2(A)}+2r+\frac{4\sqrt{\beta m}\sqrt{r}}{\s_{\max}(A)}}<\kappa_q^{-2}(A).
        \]
        Then the expected error of the qRK algorithm decays as
        \[
            \E\|x_k-\x\|^2 \leq (1-C)^k\|x_0-\x\|^2+\frac{1}{C}\groupp{2r\frac{1-q}{q}+ 1}\ve_{(\beta m+1)}^2.
        \]
        where $\ve_{(j)}$ is the $j$-th largest entry in magnitude of $\ve$, and
        \[
            C=p\hat\kappa_q^{-2}(A)-\frac{1}{q}\groupp{\beta+\frac{2\s_{\max}^2(A)r}{m}+\frac{4\s_{\max}(A)\sqrt{\beta r}}{\sqrt{m}}}.
        \]
    \end{cor}

It is important to note the error horizon bound of RK from \Cref{theorem:EH} is not necessarily larger than that of qRK in \Cref{theorem:qRK_EH}. Indeed, this should be expected when the largest (in magnitude) entry of $\eta+\xi$ is on the same order as its $(\beta m + 1)$-th largest entry. If it is the case that the system $Ax=b_t+\ve$ witnesses
\begin{equation}
    \frac{\ve_{(\beta m+1)}}{\ve_{(1)}}<\groupb{\frac{C m}{\s_{\min}^2(A)\groupp{\frac{2r(1-q)}{q}+1}}}^{\frac{1}{2}},
    \label{eq:EH_condition}
\end{equation}
where $\ve_{(j)}$ is defined as in \Cref{cor:generalqRK_EH}, then the error horizon bound of qRK from \Cref{theorem:qRK_EH} is smaller than that of RK in \Cref{theorem:EH}. In other words, to guarantee qRK has an error horizon bound smaller than RK, it is sufficient to require a large enough disparity between the $(\beta m-1)$-th largest entry and the largest entry of the corruption in the system. In the setting where sparse corruption is present, this states a significant enough gap between the magnitude of the noise and the magnitude of the corruption is required. 

The above analysis focuses on the \textit{bounds} of the error horizons of RK and qRK. However, because RK selects rows completely at random, there is a nonzero probability of selecting the row corresponding to the largest corruption in $b$ infinitely often. If we label this index $i$, then the iterate $x_{k+1}$ given by selecting index $i$ would witness
\[
    \|x_{k+1}-\x\|^2= \|x_k^\ast -\x\|^2+|\ve_i|^2\geq |\ve_i|^2,
\]
where $x_k^\ast$ is as defined in the proof of \Cref{theorem:qRK_EH} at the end of \Cref{sec:qRK_EH_proof}. Therefore, when \eqref{eq:EH_condition} holds, we are not only guaranteed that the error horizon bound of qRK is smaller than RK, but we should also expect that for a given run of qRK and RK, the realized error horizon observes this relation.

\subsubsection{Comparison to previously established error horizon bounds}
    \label{sec:error_horizon_bound_comparison}
    In~\cite{timevaryingcorruptionqRK,sparse_qRK_EH}, the authors also consider the mixture of sparse and non-sparse noise. In this section, we compare our bound to the results in the non-time-varying setting. In particular, we show our bound improves \cite{timevaryingcorruptionqRK} for sufficiently tall matrices and for a collection of lower-dimensional matrices, and we show our bound is comparable to that of \cite{sparse_qRK_EH} with the added benefit of holding independent of the choice of matrix $A$. 
    
    To more precisely outline the result presented in \cite{timevaryingcorruptionqRK}, we first define some terms. In \cite{timevaryingcorruptionqRK}, a more general setting is considered where the noise and corruption may depend on time. To express this, the right hand side of the system \eqref{eq:genlinsys} is replaced with $b^{(k)}=b_t+\eta^{(k)}+\xi^{(k)}$ where $\eta^{(k)}$ and $\xi^{(k)}$ are the time-varying noise and sparse corruption, respectively. To describe the new decay factor for qRK, let
    \[
        \vp \Def \hat\kappa^{-2}_q p-\frac{\s_{\max}^2(A)}{qm}\groupp{2rs+r^2s^2}-\frac{\s_{\max}(A)}{qm}\frac{1}{\sqrt{\beta m}}\groupp{r+r^2s},
    \] 
    where $s = \sqrt{\frac{1-\beta}{\beta}}$.
    
    To describe the error horizon bound, let 
    \[
        \zeta \Def \frac{\s_{\max}(A)}{qm}\frac{ 1}{\sqrt{\beta m}}\groupp{r+r^2s}+\frac{r^2}{q\beta m^2},
    \]
    and
    \[
        \gamma_k \Def \sum_{j=0}^k (1-\vp)^{k-j} \groupp{\frac{\|\eta^{(j)}\|_2^2}{(q-\beta)m}+\zeta \|\eta^{(j)}\|_1^2}.
    \]
    
    \begin{theorem}
    \label{theorem:timevar_theorem}
    (\cite[Theorem 1.4]{timevaryingcorruptionqRK}).
    Suppose $A$ is row-normalized and full-rank, $\x$ is the solution to the over-determined system $Ax = b_t$, and $\|\xi^{(k)}\|_0\leq \beta m$ for all $k\in\N$. Let $\beta < q < 1-\beta$. If $\vp>0$ and $x_k$ is inductively defined by applying one iteration of qRK to the system $Ax=b_t+\eta^{(k)}+\xi^{(k)}$ with initial point $x_{k-1}$, then 
    \[
        \E\|x_k-\x\|^2\leq (1-\vp)^k\|x_0-\x\|^2+\gamma_{k-1}.
    \]
    \end{theorem}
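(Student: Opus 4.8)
The plan is to prove the bound by induction on $k$, reducing it to a single one-step contraction of the form $\E\groupb{\|x_k-\x\|^2\mid x_{k-1}}\leq(1-\vp)\|x_{k-1}-\x\|^2+\frac{\|\eta^{(k)}\|_2^2}{(q-\beta)m}+\zeta\|\eta^{(k)}\|_1^2$ and then summing the resulting geometric series. First I would fix a step, condition on $x_{k-1}$, and write $r\Def b_t+\eta^{(k)}+\xi^{(k)}-Ax_{k-1}$ for the residual and $I\Def\groupc{j\in[m]:|r_j|\leq Q}$ for the admissible set, so that $|I|=qm$. Because $A$ is row-normalized, the qRK update is $x_k=x_{k-1}+r_ia_i$ with $i$ drawn uniformly from $I$, and since $A\x=b_t$ gives $\lll a_i,x_{k-1}-\x\rrr=-r_i+\eta_i^{(k)}+\xi_i^{(k)}$, expanding the square yields $\|x_k-\x\|^2=\|x_{k-1}-\x\|^2-r_i^2+2r_i\groupp{\eta_i^{(k)}+\xi_i^{(k)}}$. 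Taking the conditional expectation over $i\sim\unif(I)$ leaves $\|x_{k-1}-\x\|^2-\frac1{qm}\sum_{i\in I}r_i^2+\frac2{qm}\sum_{i\in I}r_i\groupp{\eta_i^{(k)}+\xi_i^{(k)}}$, and the remainder of the proof is the estimation of these last two sums.

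For the negative term I would use the confusion-matrix bookkeeping of \Cref{tab:confusion_mat}: the set $G\Def I\sm\supp(\xi^{(k)})$ of admissible non-corrupt rows has $|G|\geq(q-\beta)m$. On $G$ we have $r_i=\lll a_i,\x-x_{k-1}\rrr+\eta_i^{(k)}$, so discarding the remaining nonnegative terms of $\sum_{i\in I}r_i^2$ and invoking $\|A_Gv\|\geq\s_{q-\beta,\min}(A)\|v\|$, followed by a Young split of the $\eta$ cross term, contributes a term of order $-\s_{q-\beta,\min}^2(A)\|x_{k-1}-\x\|^2/(qm)$ together with a $+\|\eta^{(k)}\|_2^2/((q-\beta)m)$ remainder; once the corrupt admissible rows are folded in, this becomes the $\hat\kappa_q^{-2}p$ leading term of $\vp$ and the first summand of $\gamma_{k-1}$.

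The positive term is the delicate part, and I expect it to be the main obstacle. For an admissible corrupt row $i$ one writes $\xi_i^{(k)}=r_i-\lll a_i,\x-x_{k-1}\rrr-\eta_i^{(k)}$ and combines $-r_i^2+2r_i\xi_i^{(k)}$; after cancellation this leaves $-\lll a_i,\x-x_{k-1}\rrr^2$ plus terms of the form (bounded by $Q$)${}^2$, $Q\,|\lll a_i,\x-x_{k-1}\rrr|$, and $Q\,|\eta_i^{(k)}|$, and summing over the at most $\beta m$ such rows with Cauchy--Schwarz produces $\beta m\,Q^2$, $\sqrt{\beta m}\,Q\,\s_{\max}(A)\|x_{k-1}-\x\|$, and $Q\|\eta^{(k)}\|_1$. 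The quantile $Q$ is then bounded from above by observing that at least $(1-q-\beta)m$ of the rows with $|r_j|>Q$ are non-corrupt, so over that index set $(1-q-\beta)m\,Q\leq\sum_j|r_j|\leq\sqrt{(1-q-\beta)m}\,\s_{\max}(A)\|x_{k-1}-\x\|+\|\eta^{(k)}\|_1$ (and similarly with squares); substituting this, tracking which pieces scale with $\|x_{k-1}-\x\|^2$, $\|\eta^{(k)}\|_2^2$, and $\|\eta^{(k)}\|_1^2$, converting between $\ell_1$ and $\ell_2$ via $\|\eta^{(k)}\|_1\leq\sqrt{(1-\beta)m}\,\|\eta^{(k)}\|_2$ (this, against the $1/\sqrt{\beta m}$ prefactor, is the origin of $s=\sqrt{(1-\beta)/\beta}$), and applying Young's inequality with weights tuned so the $\|x_{k-1}-\x\|^2$ coefficients assemble to exactly $1-\vp$, is what generates the factors $2rs+r^2s^2$ and $r+r^2s$ with $r=\beta/(1-q-\beta)$ as well as the coefficient $\zeta$ on $\|\eta^{(k)}\|_1^2$. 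The case $\supp(\eta^{(k)})\cap\supp(\xi^{(k)})\neq\varnothing$ requires no separate argument, since the bound uses disjointness only through sparsity counts and the decomposition can be re-chosen exactly as in the discussion following \Cref{theorem:qRK_EH}.

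Finally, I would assemble the two estimates into the claimed one-step recursion with the stated $\vp$ and $\zeta$, take total expectations, and iterate it over the $k$ steps; this produces $(1-\vp)^k\|x_0-\x\|^2$ together with the geometric sum $\sum_j(1-\vp)^{(k-1)-j}\groupp{\frac{\|\eta^{(j)}\|_2^2}{(q-\beta)m}+\zeta\|\eta^{(j)}\|_1^2}$, which is precisely $\gamma_{k-1}$. The hypothesis $\vp>0$ is exactly what makes $1-\vp$ a genuine contraction factor (its upper bound $\vp<\hat\kappa_q^{-2}p\leq1$ holds automatically from $\s_{q-\beta,\min}^2(A)\leq\s_{\max}^2(A)\leq\|A\|_F^2=m$), so the series is finite and the induction closes.
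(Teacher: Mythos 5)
This theorem is not proved in the paper: it is quoted verbatim from \cite{timevaryingcorruptionqRK} (note the citation ``\cite[Theorem~1.4]{timevaryingcorruptionqRK}'' in the theorem header), together with the externally defined constants $\vp$, $\zeta$, $s$, $r$, and $\gamma_{k-1}$, and appears in \Cref{sec:error_horizon_bound_comparison} only as a benchmark against which the authors compare their own horizon bound \eqref{eq:NEWboundednoise_EH}. There is therefore no ``paper's own proof'' to match your argument against, so the requested comparison cannot be made; the best I can do is assess your outline on its own merits.

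On those terms, your architecture is sound and in fact closely mirrors how this paper proves its own analogue (\Cref{theorem:qRK_EH} via \Cref{cor:qRK_EH_quantilebound}, \Cref{cor:qRK_EH_corruptE}, and \Cref{lem:qRK_EH_differentindexE}): expand $\|x_k-\x\|^2$ in one step using the identity $\|x_k-\x\|^2=\|x_{k-1}-\x\|^2-r_i^2+2r_i(\eta_i^{(k)}+\xi_i^{(k)})$ (correct for row-normalized $A$), isolate admissible non-corrupt rows via $\s_{q-\beta,\min}(A)$ and $|G|\ge(q-\beta)m$, bound the admissible-corrupt contribution by a quantile estimate obtained from the at-least-$(1-q-\beta)m$ non-corrupt rows above $Q$, and iterate the one-step inequality into a geometric series. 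All of that is the right skeleton.

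What your sketch does not do is verify that the bookkeeping actually assembles to this particular $\vp$ and $\zeta$, and that is where essentially all the content of the statement resides. You flag this yourself (``I expect it to be the main obstacle''), and you are right to: $\vp$ has $p\hat\kappa_q^{-2}$ as its leading term with no Young-loss factor, while the pieces $\frac{\s_{\max}^2(A)}{qm}(2rs+r^2s^2)$ and $\frac{\s_{\max}(A)}{qm\sqrt{\beta m}}(r+r^2s)$ arise from one specific choice of how to route the quantile estimate (first versus second moment, how to distribute Young weights, and where to convert $\|\eta^{(k)}\|_1$ into $\|\eta^{(k)}\|_2$); other equally valid choices give constants of the same general shape but different values. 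Until each such choice is fixed and tracked so that the coefficient of $\|x_{k-1}-\x\|^2$ is exactly $1-\vp$ and the remainder is exactly $\frac{\|\eta^{(k)}\|_2^2}{(q-\beta)m}+\zeta\|\eta^{(k)}\|_1^2$, you have a plan for a theorem of this type, not a proof of this statement. Two smaller points: (i) your iteration produces $\sum_{j=1}^{k}(1-\vp)^{k-j}g_j$, whereas $\gamma_{k-1}=\sum_{j=0}^{k-1}(1-\vp)^{k-1-j}g_j$ indexes the noise vectors from $0$ to $k-1$, so there is an off-by-one to reconcile with the convention in \cite{timevaryingcorruptionqRK}; (ii) $\s_{q-\beta,\min}^2(A)\le m$ alone does not give $p\hat\kappa_q^{-2}\le 1$ (that would require $q(1-q)\le\beta$), though the sharper bound $\s_{q-\beta,\min}^2(A)\le(q-\beta)m$ does give $p\hat\kappa_q^{-2}\le p^2<1$.
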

    
    \begin{cor}
    \label{cor:timevar_cor}
    (\cite[Corollary 1.4.1]{timevaryingcorruptionqRK}). In the same setting as \Cref{theorem:timevar_theorem}, when there exists $N>0$ such that $\|\eta^{(j)}\|_\infty\leq N$ for all $j\in [k]$, then 
        \[
            \E\|x_k-\x\|^2\leq (1-\vp)^k\|x_0-\x\|^2 + (1+\zeta m^2)N^2 \frac{1-(1-\vp)^k}{\vp}.
        \]
    \end{cor}
    
    Our setting described by \eqref{eq:genlinsys} has time-invariant noise $\eta$ and corruption $\xi$. Then \Cref{cor:timevar_cor} applies with $\|\eta\|_\infty$ in place of $N$. We are now equipped to present a comparison to this error horizon bound. It is clear from the proof of \Cref{theorem:qRK_EH} that 
    \begin{equation}
        \E\|x_k-\x\|^2\leq (1-C)^k\|x_{0}-\x\|^2 + \frac{1-(1-C)^k}{C}\groupp{\frac{2r(1-q)}{q}+ 1}N^2. 
        \label{eq:NEWboundednoise_EH}
    \end{equation}
    Additionally, the coefficient on $N^2$ given in \Cref{cor:timevar_cor}, can be expanded to 
    \[
        \frac{1-(1-\vp)^k}{\vp}(1+\zeta m^2) =\frac{1-(1-\vp)^k}{\vp}\groupp{1 + \s_{\max}(A)\sqrt{\frac{m}{\beta}}(r+r^2s)+\frac{r^2}{q\beta}},
    \]
    which means the error horizon bound from \Cref{cor:timevar_cor} grows with $m$. The error horizon given in \eqref{eq:NEWboundednoise_EH} remedies this. It is clear for $m\geq 4$ that $2(1-q)\leq \sqrt{m(1-\beta)}$, $1-q-\beta<1$, and $\s_{\max}\geq 1$, whereby 
    \[
        \frac{2r(1-q)}{q}+ 1 \leq 1+\frac{\s_{\max}(A)\beta\sqrt{m(1-\beta)}}{q(1-q-\beta)^2} = 1+\frac{\s_{\max}(A)}{q}\sqrt{\frac{m}{\beta}}r^2s< 1+\zeta m^2,
    \]
    so for sufficiently large $m$, the error horizon bound given in \Cref{cor:timevar_cor} vastly exceeds that of \eqref{eq:NEWboundednoise_EH}. 
    The need for sufficiently large $m$ may be removed in some cases. Because $(1-C)^k, (1-\vp)^k\ra 0$ as $k\ra \infty$, we are only interested in showing $\vp< C$. Indeed, if 
    \begin{equation}
        m < \frac{1}{4\sqrt{\beta}\sqrt{1-q-\beta}}+\frac{\sqrt{1-\beta}}{4\sqrt{1-q-\beta}^3}, \label{eq:conditions_for_timevarying_improvement_1} 
    \end{equation}
    it follows that 
    \[
        0 < \frac{\s_{\max}(A)}{qm}\groupp{\frac{\sqrt{\beta m}}{m(1-q-\beta)}+\frac{\beta\sqrt{m(1-\beta)}}{m(1-q-\beta)^2}}-\frac{\beta}{q}\frac{4\s_{\max}(A)}{\sqrt{m}\sqrt{1-q-\beta}}.
    \]
    Additionally, because $1-q-\beta$ and $1-\beta$ are less than one, it suffices to show 
    \[
        0<\frac{\s_{\max}^2(A)(1-\beta)}{qm}\groupp{\frac{2\sqrt{\beta}}{\sqrt{1-q-\beta}}+\frac{\beta}{1-q-\beta}}-\frac{\beta}{q}\groupp{1+\frac{2\s_{\max}^2(A)}{m(1-q-\beta)}}
    \]
    to assert 
    \[
        0<\frac{\s_{\max}^2(A)}{qm}\groupp{\frac{2\sqrt{\beta(1-\beta)}}{1-q-\beta}+\frac{\beta(1-\beta)}{(1-q-\beta)^2}}-\frac{\beta}{q}\groupp{1+\frac{2\s_{\max}^2(A)}{m(1-q-\beta)}},
    \]
    which is satisfied if 
    \begin{equation}
        \frac{\beta}{1-q-\beta} < 1 \quad \text{ and }\quad \s_{\max}^2(A) > \frac{\beta m}{2}\groupp{\frac{1-q-\beta}{\sqrt{\beta}\sqrt{1-q-\beta}-\beta}}. \label{eq:conditions_for_timevarying_improvement_2}
    \end{equation}
    Then if conditions \eqref{eq:conditions_for_timevarying_improvement_1} and \eqref{eq:conditions_for_timevarying_improvement_2} are met, $\vp < C$, as desired.

    In \cite{sparse_qRK_EH}, the coefficient of $\|\eta\|_\infty^2$ in the error horizon bound for \textit{one} iteration is given by 
    \begin{equation}
        \frac{2r(1-\beta)}{q\sqrt{\beta}}\groupp{1+r\sqrt{\frac{1-\beta}{\beta}}}\sqrt{\frac{n}{m}}\s_{\max}(A)+\frac{1}{2}\groupp{r\frac{(1-\beta)^2}{q(1-q-\beta)}+1}. \label{eq:sparse_EH_bound}
    \end{equation}
    On the other hand, the proof of \Cref{theorem:qRK_EH} shows that the coefficient of $\|\eta\|_\infty^2$ from our error horizon bound for \textit{one} iteration is given by 
    \begin{equation}
        \frac{2r(1-q)}{q}+1. \label{eq:our_EH_bound}
    \end{equation}
    Because $1-q<1-\beta$ and $(1-\beta)/(1-q-\beta) > 1$, it is clear
    \[
        \frac{r(1-q)}{q}+1\leq \frac{r(1-\beta)^2}{q(1-q-\beta)} +1,
    \]
    so 
    \begin{equation}
        \frac{2r(1-q)}{q}+1\leq 4\groupp{\frac{1}{2}\groupp{r\frac{(1-\beta)^2}{q(1-q-\beta)}+1}}. \label{eq:err_bound_const_mult}
    \end{equation}
    This means the coefficient \eqref{eq:our_EH_bound} is, at worst, a universal constant multiple of a \textit{part} of the coefficient \eqref{eq:sparse_EH_bound}. More importantly, our bound has the added benefit of holding independently of $m$, $n$, and $\sigma_{\max}(A)$.

\subsection{Analogous Results for dqRK}
\label{sec:dqRK_analogous_results}

    Because the work providing convergence guarantees for dqRK closely parallels that of qRK, it is perhaps unsurprising that there are natural dqRK counterparts to the results for qRK. Note that if $p$ is to represent the worst-case measure of what proportion of the admissible rows are guaranteed to be non-corrupt, then for dqRK we should set $p=\frac{q-q_0-\beta}{q-q_0}$. Similarly, if $r$ is to bound the ratio of false positives to all erroneous classifications, we may use re-use $r=\frac{\beta}{1-q-\beta}>\frac{\beta}{1-q+q_0-\beta}$. The first extension of qRK to dqRK comes in the form of a respective improved convergence bound:
    \begin{cor}
        \label{cor:dqRK_STD_NEWbound}
        (Alternative dqRK Bound). Suppose $\beta < q_0 < q < 1-\beta$ and $q-q_0>\beta$. Assume $A$ is row-normalized and full-rank. Let $\x$ be the solution to the over-determined system  $Ax=b_t$, $\|\xi\|_0\leq \beta m$, and $\eta=0$. If 
        \[
            \frac{q}{q-q_0-\beta}\groupp{\frac{\beta m}{\s_{\max}^2(A)}+2r}< \kappa_q^{-2}(A)+ \frac{\kappa_{q_0}^{-2}(A)}{q_0m},
        \]
        and $\lll x_0, a_j\rrr = b_j$ for some $j\in [m]$, then the $k-th$ iterate of dqRK applied to the system $Ax=b$ witnesses
        \[
            \E\|x_k-\x\|^2 \leq (1-C)^k\|x_0-\x\|^2,
        \]
        where 
        \[
            C=p\groupp{\hat\kappa_q^{-2}(A)+ \frac{\hat\kappa_{q_0}^{-2}(A)}{qm}}-\frac{1}{q-q_0}\groupp{\beta+\frac{2\s_{\max}^2(A)r}{m}}
        \]
    \end{cor}

    Interestingly, the bound for the rate of convergence of dqRK from \Cref{cor:dqRK_STD_NEWbound} is provably smaller than that of \Cref{theorem:dqrk} under the same mild constraints from \Cref{theorem:boundcomparisontheorem}! Additionally, the discussion in \Cref{sec:qRK_new_bounds} of how restrictive these hypotheses are also applies in the dqRK setting.

    \begin{cor}
        \label{cor:dqRKboundcomparisontheorem}
        Under the same conditions as \Cref{theorem:boundcomparisontheorem}, $\alpha_1^{\text{dqRK}}<\alpha_2^{\text{dqRK}}$ where 
        \begin{align*}
            \alpha_1^{\text{dqRK}} &\Def 1 - p\groupp{\hat\kappa_q^{-2}(A)+ \frac{\hat\kappa_{q_0}^{-2}(A)}{qm}} + \frac{1}{q-q_0}\groupp{\beta+\frac{2\s_{\max}^2(A)r}{m}}\\
            \alpha_2^{\text{dqRK}} &\Def 1 - p\groupp{\hat\kappa_q^{-2}(A)+ \frac{\hat\kappa_{q_0}^{-2}(A)}{qm}} + \frac{\s_{\max}^2(A)}{(q-q_0)m}\groupp{2\sqrt{r}+r},
        \end{align*}
        which are the decay factors from \Cref{cor:dqRK_STD_NEWbound} and \Cref{theorem:dqrk}, respectively.
    \end{cor}
    \begin{proof}
        Similarly to the proof of \Cref{theorem:boundcomparisontheorem}, the second hypothesis and \eqref{eq:firstcondition_rewritten} makes it clear 
        \[  
            \frac{\s_{\max}^2(A)}{(q-q_0)m}\groupp{\frac{2\sqrt{\beta}}{\sqrt{1-q-\beta}}+\frac{\beta}{1-q-\beta}-\frac{2\beta}{1-q-\beta}}>\frac{\beta}{q-q_0},
        \]
        so 
        \[
            \frac{\beta}{q-q_0}+\frac{ 2\beta\s_{\max}^2(A)}{m(q-q_0)(1-q-\beta)}-\frac{\s_{\max}^2(A)}{(q-q_0)m}\groupp{\frac{2\sqrt{\beta}}{\sqrt{1-q-\beta}}+\frac{\beta}{1-q-\beta}}<0.
        \]
        Then it is clear $\alpha_1^{\text{dqRK}}<\alpha_2^{\text{dqRK}}$.
    \end{proof}

     The final result from qRK that extends naturally to dqRK is \Cref{theorem:qRK_EH}. Though only the result that is most related to \Cref{theorem:qRK_EH} is presented, the same discussion in \Cref{sec:qRK_EH}, i.e. the lack of importance of the hypothesis $\supp(\eta)\cap \supp(\xi)=\varnothing$ and the subsequent formulation of \Cref{cor:generalqRK_EH}, extends to the dqRK setting.

    \begin{theorem}
        \label{theorem:dqRK_EH}
        (dqRK Error Horizon). Suppose $A$ is row-normalized and full-rank. Let $\x$ be the solution to the over-determined system $Ax=b_t$ and $b=b_t+\eta+\xi$ with $\|\xi\|_0\leq \beta m$ and $\supp(\eta)\cap \supp(\xi)=\varnothing$. Suppose further that $\beta<q_0<q<1-\beta$, $q-q_0>\beta$, and 
        \[
            \frac{q}{q-q_0-\beta}\groupp{\frac{\beta m}{\s_{\max}^2(A)}+2r+\frac{4\sqrt{\beta m}\sqrt{r}}{\s_{\max}(A)}}<\kappa_q^{-2}(A).
        \]
        The expected error of the dqRK algorithm applied to the system $Ax=b_t+\eta+\xi$ decays as
        \[
            \E\|x_{k+1}-\x\|^2\leq (1-C)\E\|x_k-\x\|^2+\frac{1}{C}\groupp{2r\frac{1-q}{(q-q_0)}+1}\|\eta\|_\infty^2,
        \]
        where 
        \[
             C=p\hat\kappa_q^{-2}(A)-\frac{1}{q-q_0}\groupp{\beta+\frac{2\s_{\max}^2(A)r}{m}+\frac{4\s_{\max}(A)\sqrt{\beta r}}{\sqrt{m}}}.
        \]
    \end{theorem}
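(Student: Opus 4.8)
The proof parallels that of \Cref{theorem:qRK_EH}: one establishes a one-step contraction and then unrolls it, with the $qm$-element admissible set of qRK replaced throughout by the $(q-q_0)m$-element set $I=\groupc{j:Q_0<|r_j|\le Q}$ of dqRK and with $p=(q-q_0-\beta)/(q-q_0)$. First I would condition on the iterate $x_k$, write $e_k=x_k-\x$, and use row-normalization so that the update is $x_{k+1}=x_k+r_ia_i$ with $i$ uniform over $I$. Since each $a_i$ is a unit vector the cross term in the expansion of $\|e_{k+1}\|^2$ vanishes, giving the exact identity $\|e_{k+1}\|^2=\|e_k\|^2-\langle e_k,a_i\rangle^2+(\eta_i+\xi_i)^2$; averaging over $i\in I$ yields
\[
    \E\groupb{\|e_{k+1}\|^2\mid x_k}=\|e_k\|^2-\frac{1}{(q-q_0)m}\sum_{i\in I}\langle e_k,a_i\rangle^2+\frac{1}{(q-q_0)m}\sum_{i\in I}(\eta_i+\xi_i)^2 .
\]
Note that $Q_0$ plays no role in this identity, so the right-hand side will be bounded exactly as for qRK except for the treatment of the progress term.

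The next ingredient is the upper bound $Q_0\le Q\le\|\eta\|_\infty+\s_{\max}(A)\|e_k\|/\sqrt{(1-q-\beta)m}$, which follows from a counting argument: since $\sum_j\langle e_k,a_j\rangle^2=\|Ae_k\|^2\le\s_{\max}^2(A)\|e_k\|^2$, at most $(1-q-\beta)m$ of the (at least $(1-\beta)m$) rows with $\xi_j=0$ can have $|\langle e_k,a_j\rangle|$ exceed $\s_{\max}(A)\|e_k\|/\sqrt{(1-q-\beta)m}$, hence at least $qm$ residuals lie below the stated value. I would then split $I=I_c\sqcup I_\xi$ with $I_\xi=I\cap\supp(\xi)$, $|I_\xi|\le\beta m$, $|I_c|\ge p(q-q_0)m$. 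On $I_c$ only the noise survives, so $\frac{1}{(q-q_0)m}\sum_{i\in I_c}(\eta_i+\xi_i)^2\le\|\eta\|_\infty^2$; on $I_\xi$ I would write $\xi_i=r_i+\langle e_k,a_i\rangle$, use $|r_i|\le Q$, and apply Minkowski's inequality to obtain $\sum_{i\in I_\xi}\xi_i^2\le\groupp{Q\sqrt{\beta m}+\|A_{I_\xi}e_k\|}^2$. The $\|A_{I_\xi}e_k\|^2$ released by this square cancels the corresponding part of the progress term, leaving $-\frac{1}{(q-q_0)m}\|A_{I_c}e_k\|^2$ together with the lower-order pieces $Q^2\beta m$ and $2Q\sqrt{\beta m}\|A_{I_\xi}e_k\|$; for $\|A_{I_c}e_k\|^2$ I would invoke the dqRK estimate of \cite{dqRK} — that the $q_0m$ excluded small-residual rows carry a negligible share of $\|e_k\|^2$ — to conclude $\frac{1}{(q-q_0)m}\|A_{I_c}e_k\|^2\ge p\,\hat\kappa_q^{-2}(A)\|e_k\|^2$. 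Substituting the bound on $Q$, expanding, and using AM--GM on every surviving $\|\eta\|_\infty\|e_k\|$ cross term to split it between an $\|e_k\|^2$ contribution and an $\|\eta\|_\infty^2$ contribution, the $\|e_k\|^2$ terms assemble into $(1-C)\|e_k\|^2$ for the stated $C$ and the noise terms into $\groupp{2r\frac{1-q}{q-q_0}+1}\|\eta\|_\infty^2$. Taking total expectations gives the one-step recursion, and unrolling it and summing the geometric series — legitimate because the hypothesis on $\kappa_q^{-2}(A)$ is precisely what makes $C>0$ — produces the claimed bound, the factor $1/C$ being $\sum_{j\ge 0}(1-C)^j$.

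The delicate step is the progress-term lower bound. Unlike in qRK, the dqRK admissible set discards the smallest residuals as well, so one cannot merely apply the subset singular value inequality to $I_c$ (indeed $\s_{q-q_0-\beta,\min}(A)$ can vanish even when the theorem's hypotheses hold); instead one argues, following \cite{dqRK}, that excluding those rows is essentially free because a clean row with a small residual has small $\langle e_k,a_j\rangle$. In the present dense-noise setting that argument must absorb an extra additive $\|\eta\|_\infty$ in each such residual, which is exactly the origin of the extra $\|\eta\|_\infty$-dependence of the error horizon relative to the sparse-only bound of \Cref{cor:dqRK_STD_NEWbound}; carrying the resulting constants through to the stated $C$ and horizon coefficient is the main bookkeeping burden. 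Everything else is a transcription of the argument behind \Cref{theorem:qRK_EH}, with $q$ replaced by $q-q_0$ wherever the size of the admissible set enters.
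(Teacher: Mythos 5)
Your proposal takes a genuinely different route from the paper, though both compute the same underlying object. The paper introduces the modified iterate $x_{k+1}^\ast$ (projecting onto the sparse-corruption-only hyperplane $H_i^\ast = \{x : \lll x, a_i\rrr = (b_t+\xi)_i\}$), uses $\supp(\eta)\cap\supp(\xi)=\varnothing$ to get $\|x_{k+1}-\x\|^2=\|x_{k+1}^\ast-\x\|^2+\eta_i^2$, conditions on whether the chosen index is corrupt, and in the corrupt case bounds $\|x_{k+1}^\ast-\x\|\le\|x_k-\x\|+Q$ followed by $(x+y)^2\le 2x^2+2y^2$. You instead work directly from the exact orthogonal identity $\|e_{k+1}\|^2=\|e_k\|^2-\lll e_k,a_i\rrr^2+(\eta_i+\xi_i)^2$, split $I$ into $I_c$ and $I_\xi$, and cancel $\|A_{I_\xi}e_k\|^2$ against the progress term. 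Your route is cleaner and avoids the lossy Young inequality; it also sharpens the quantile bound (your counting argument gives coefficient $1$ on $\|\eta\|_\infty$ versus the paper's $\sqrt{1-q}/\sqrt{1-q-\beta}$ from \Cref{cor:dqRK_EH_quantilebound}). Both routes ultimately lean on the same delicate lemma for the progress term on $I_c$: the averaging inequality from \cite[Lemma 1]{dqRK} that lets one drop the lower quantile threshold, which the paper invokes in \Cref{cor:dqRK_EH_noncorruptE} and which you gesture at.

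There is a genuine gap, however, in the passage from your one-step expansion to ``the $\|e_k\|^2$ terms assemble into $(1-C)\|e_k\|^2$ for the stated $C$.'' Carrying your bounds through (using, say, $\|A_{I_\xi}e_k\|\le\sqrt{\beta m}\,\|e_k\|$) produces a decay factor $1-C'$ with
\[
    C'= p\hat\kappa_q^{-2}(A)-\frac{1}{q-q_0}\groupp{\frac{\s_{\max}^2(A)r}{m}+\frac{2\s_{\max}(A)\sqrt{\beta r}}{\sqrt{m}}}+(\text{AM--GM corrections}),
\]
which lacks the $\beta/(q-q_0)$ term entirely and has half the coefficients on the other two terms relative to the stated $C$. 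The missing $\beta/(q-q_0)$ comes precisely from the factor of $2$ in the paper's $(x+y)^2\le 2x^2+2y^2$ step; your exact identity never produces it. So you will \emph{not} arrive at the stated $C$ by bookkeeping as claimed---you will arrive at a \emph{different} constant $C'$, plus AM--GM penalties and a different noise coefficient. This does not mean your approach fails: before the AM--GM corrections $C'>C$, so a carefully tuned split would give a strictly stronger one-step recursion which implies the theorem. But you must actually exhibit that implication (verify $C'\ge C>0$ under the theorem's hypothesis, and that the noise coefficient is no worse), or else concede the lossier paper-style bound; as written, the assembly step is asserted, not established. A secondary inaccuracy: you locate the origin of the $\|\eta\|_\infty$ error-horizon term in the progress-term argument ``absorbing an extra additive $\|\eta\|_\infty$,'' but in the paper that bound (\Cref{cor:dqRK_EH_noncorruptE}) is noise-free; the $\|\eta\|_\infty$ enters only through the quantile bound on $Q$ (which contaminates the corrupt-index contribution) and through the orthogonal $\eta_i^2$ increment on clean rows. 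Separately, you should be explicit that the averaging argument is not the subset singular-value inequality---you correctly note $\s_{q-q_0-\beta,\min}$ can vanish---but a monotonicity of averages; in the noisy setting the admissible set is ordered by $|r_j-\eta_j|$ rather than by $|\lll e_k,a_j\rrr|$, so one must confirm that \cite[Lemma 1]{dqRK} indeed covers this noise-permuted ordering rather than silently reusing the noiseless intuition.
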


\section{Empirical Results}
    \label{sec:empirical_results}
    This section presents the empirical behavior of RK, qRK, and dqRK applied to \eqref{eq:genlinsys} using reproducible code that may be found at \url{https://github.com/Mr-E-User/Quantile-RK-and-Double-Quantile-RK-Error-Horizon-Analysis}. Our first two experiments show the convergence of RK, qRK, and dqRK on systems with only dense noise and systems with both noise and large, sparse corruptions. Together, these experiments demonstrate that the horizons for qRK and dqRK outperform RK more significantly in the presence of large, sparse corruptions. In our last experiment, we observed the behavior of the empirical error horizons of RK and dqRK as a function of the scaling of sparse corruptions, demonstrating that scaling has a minimal impact on the horizon compared to RK. For all the numerical experiments, the following parameters were used: $m=5000$, $n=2500$, $\beta=0.05$, $q_0=0.6$, and $q=0.8$. The approximation errors in \Cref{fig:NOISE_ONLY_error_horizon_comparison} and \Cref{fig:error_horizon_comparison} are given by a single sample run of each algorithm, but the results across runs are consistent with those presented. Since the focus of this work is on the study of the error horizon, we refer the reader to the papers \cite{dqRK, timevaryingcorruptionqRK,qRK2} for comparisons of these methods under different quantiles, $\beta$, and problem size choices.

    In \Cref{fig:NOISE_ONLY_error_horizon_comparison}, we present the performance of RK, qRK, and dqRK on random $m\times n$ Gaussian and uniformly random matrix $A$. The solution $\x\in \R^n$ is randomly selected, and we define $b_t\Def A\x$. To introduce noise to the system, $b$ was determined by adding a Gaussian vector $\eta \sim \mathcal{N}(0,I_m)$ to $b_t$. Here, we do not consider sparse noise so we set $\xi = 0$. Then, RK, qRK, and dqRK are used to solve the system $Ax=b$, and their respective approximation errors were plotted. The goal of \Cref{fig:NOISE_ONLY_error_horizon_comparison} is to see how much of an impact failing condition \eqref{eq:EH_condition} has on the resulting error horizons for the comparative sizes of RK and qRK/dqRK. In this case, we see \eqref{eq:EH_condition} fails to hold, and despite this, the error horizon of qRK, dqRK, and RK tend to be on the same order. This is expected when the largest $(1-q) m+1$ entries of the corruption are of the same order. The methods qRK and dqRK, unfortunately, select an index corresponding to these entries infinitely often. Then, just as is the case with RK, each algorithm's iterates will have distance from $\x$ on the order of $\|b-b_t\|_\infty$.

    \begin{figure}[!htb]
        \centering
        \input{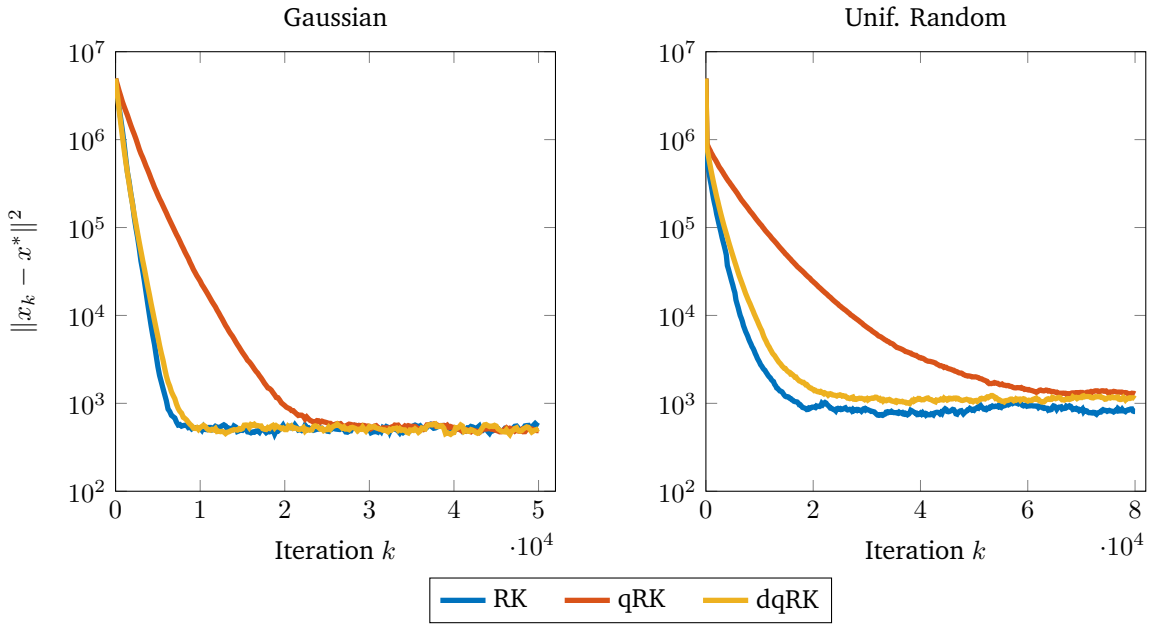}
        \caption{Comparison of approximation error for a given run of qRK, dqRK, and RK on a system with Gaussian $A$ and uniformly random $A$, both with \textit{only} fixed noise.}
        \label{fig:NOISE_ONLY_error_horizon_comparison}
    \end{figure}

    \Cref{fig:error_horizon_comparison} shows the performance of RK, qRK, and dqRK when the sparse corruption's largest entry is several orders of magnitude larger than that of the noise. The same procedure as in \Cref{fig:NOISE_ONLY_error_horizon_comparison} is used to generate $A$, $\x$, and $b_t$. To generate the sparse noise vector $\xi$, we choose $\beta m$ entries to have values uniformly selected between $0$ and $100$. The final vector $b$ is formed by adding a Gaussian vector $\eta \sim \mathcal{N}(0,I_m)$ to $b_t+\xi$. Condition \eqref{eq:EH_condition} is much more likely to be satisfied when large sparse corruption is also present, so the use of the quantile information allows qRK and dqRK to bypass the $(1-q)m$ largest corruptions, yielding a smaller error horizon. As expected, the larger updates (on average) from dqRK yields faster convergence than qRK towards the error horizon. 
    
    \begin{figure}[!htb]
        \centering
        \input{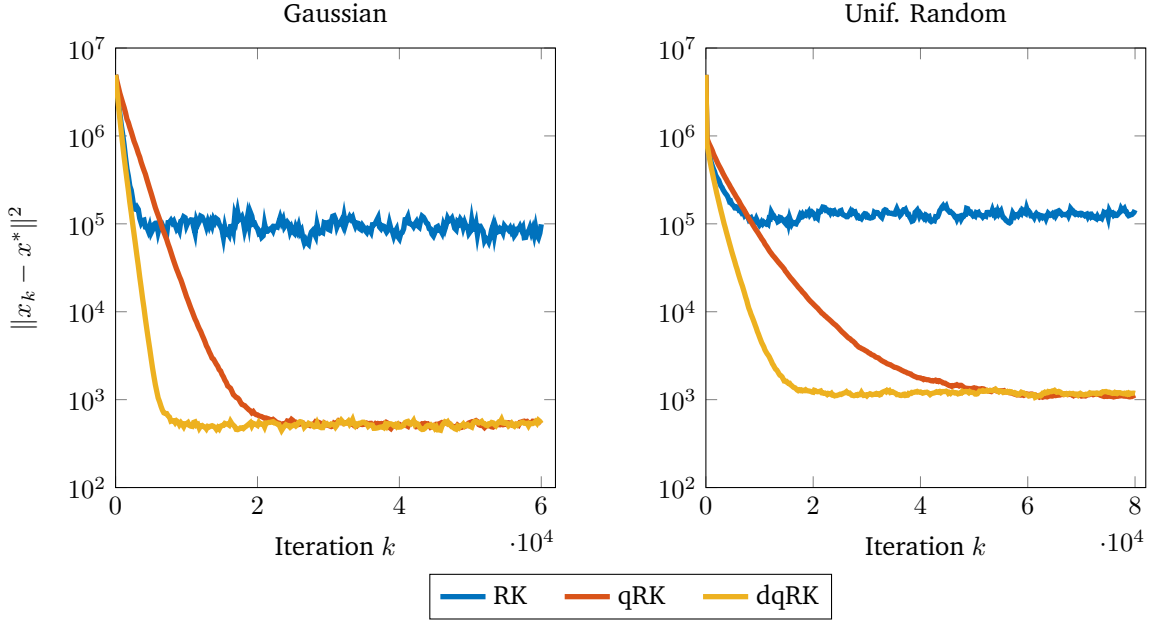}
        \caption{Comparison of approximation error for a given run of qRK, dqRK, and RK on a system with Gaussian $A$ and uniformly random $A$, both with fixed noise and large, sparse corruption.}
        \label{fig:error_horizon_comparison}
    \end{figure}

    \Cref{fig:error_horizon_vs_sparse_scale} illustrates the relationship between the magnitude of the corruption and average error horizon. To generate this plot, for each given corruption scale, we start by generating a Gaussian matrix $A$, solution vector $\x$, and $b_t := A\x$, as in \Cref{fig:NOISE_ONLY_error_horizon_comparison}. Then, $\xi$ is constructed by randomly selecting $\beta m$ entries of $b_t$ to add a value selected uniformly randomly from $[0,1]$ and scaled by the corruption scale. Finally, a random Gaussian vector is added to the resultant $b_t+\xi$ to construct $b$. For each algorithm, the point 
    \[
        \groupp{\frac{\ve_{(1)}}{\ve_{((1-q)m+1)}},\text{EH}}
    \]
    is plotted, where $\text{EH}$ is the empirical error horizon given by the maximum squared approximation error of the last 100 iterates,  $\ve=\eta+\xi$, and $\ve_{(j)}$ is as defined in \Cref{cor:generalqRK_EH}. For both algorithms, 15 sample runs were performed at each corruption scale. As was demonstrated by \Cref{fig:NOISE_ONLY_error_horizon_comparison}, when there is no sparse corruption, or the $(1-q)m +1$ largest entries of the full corruption are roughly the same size, the error horizons of RK and dqRK are on the same order. When the scale of the sparse corruption increases, dqRK starts to distinguish between the larger, sparse corruption and the noise, and is less likely to select indices in the support of the sparse corruption. Importantly, we see that empirically, a large gap between the largest noise entry and the smallest sparse corruption entry is not required to achieve a noticeably smaller error horizon using dqRK. 
    
    \begin{figure}[!htb]
    \centering
    \input{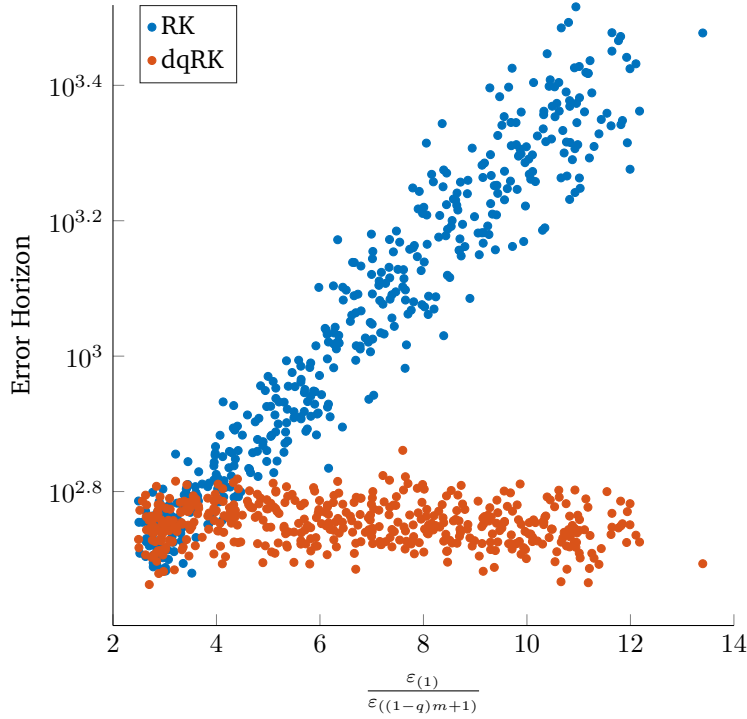}
    \caption{Correlation between the ratio $\ve_{(1)}/\ve_{((1-q)m+1)}$ and the error horizon for dqRK and RK on a system with Gaussian $A$.}
    \label{fig:error_horizon_vs_sparse_scale}
    \end{figure}

\section{Proofs}
    \label{sec:proofs}
    For the remainder of the paper, it is assumed $A$ is row-normalized. Additionally, $\E_{i\in X}Z$ will be used to denote $\frac{1}{|X|}\sum_{i\in X}Z(i)$. 
    
    \subsection{Proof of the Alternate qRK Bound (\Cref{cor:qRK_STD_NEWbound})}
    
        For this section, we consider the system \eqref{eq:genlinsys} with $\eta=0$. Additionally, $x_k$ is a fixed vector in $\R^n$ and $Q$ denotes $q\quant(\{|r_j|\}_{j=1}^m)$ where $r_j=b_j-\lll x_k,a_j\rrr$. 
    
        \label{sec:qRK_NEWBound_proof}
        The proof of \Cref{theorem:qrk} hinges on bounding the expected squared approximation error in the event a corrupt row is chosen; this was achieved in part using the Cauchy-Schwarz inequality \cite{qRK}. By using an alternate bound in \Cref{lem:qRK_STD_NEWcorruptE}, we provide a tighter convergence rate under certain conditions.  Thus, the same argument as in \cite[Main Result]{qRK} (\Cref{theorem:qrk} in this paper) using \Cref{lem:qRK_STD_NEWcorruptE} in place of \cite[Lemma 2]{qRK} is enough. For clarity, \cite[Lemma 1]{qRK} and \cite[Lemma 3]{qRK} are provided as \Cref{lem:qRK_STD_quantilebound} and \Cref{lem:qRK_STD_noncorruptE}, respectively.         

        In \Cref{lem:qRK_STD_quantilebound}, the quantile $Q$ serves to control the error incurred by selecting a corrupt index in an iteration of qRK. This will later be useful in bounding the squared expected squared approximation error in the event that a corrupt index is chosen.
        \begin{lem}
            \label{lem:qRK_STD_quantilebound}
            \cite[Lemma 1]{qRK} Let $0<q<1-\beta$. Suppose $\x$ is a solution to $A\x=b_t$. If $\|\xi\|_0\leq \beta m$, then 
            \[
                Q \leq \frac{\s_{\max}(A)}{\sqrt{m}\sqrt{1-q-\beta}}\|x_k-\x\|.
            \]
        \end{lem}

        To precise which indices are corrupt yet admissible, let $S = \{j\in C: |r_j|\leq Q\}$, where $C$ is the set indices where corruption is present, i.e. $C=\{j\in[m]:\xi_j\neq 0\}$. The difference in our convergence rate and that from \cite{qRK} is due to the difference in approaches to bounding $\E_{i\in S}\|x_{k+1}-\x\|^2$. Namely, the Cauchy-Schwarz method used in \cite{qRK} is more lossy than applying the inequality 
        \begin{equation}
            (x+y)^2\leq 2(x^2+y^2), \label{eq:usefulinequality}
        \end{equation}
        in the setting described by \Cref{theorem:boundcomparisontheorem}.
        \begin{lem}
            \label{lem:qRK_STD_NEWcorruptE}
            Assuming the same hypotheses as in \Cref{lem:qRK_STD_quantilebound}, letting $x_{k+1}$ be the random vector generated by qRK on $Ax=b$ with initial point $x_k$ and $b=b_t+\xi$ yields the following: 
            \[
                \E_{i\in S}\|x_{k+1}-\x\|^2\leq 2\groupp{1+\frac{\s_{\max}^2(A)}{m(1-q-\beta)}}\|x_k-\x\|^2.
            \]
        \end{lem}
        \begin{proof}
            This is a simple application of the triangle inequality and \eqref{eq:usefulinequality}. It is clear
            \begin{align*}
                \E_{i\in S}\|x_{k+1}-\x\|^2 &= \frac{1}{|S|}\sum_{i\in S}\|x_k+(b_i-\lll x_k,a_i\rrr)a_i-\x\|^2\\
                &\leq \frac{2}{|S|}\sum_{i\in S} \groupp{\|x_k-\x\|^2+|b_i-\lll x_k,a_i\rrr|^2}\\
                &\leq 2\groupp{\|x_k-\x\|^2+Q^2}\\
                &\leq 2\groupp{1+\frac{\s_{\max}^2(A)}{m(1-q-\beta)}}\|x_k-\x\|^2.
            \end{align*}
        \end{proof}

        Just as a bound was provided for $\E_{i\in S}\|x_{k+1}-\x\|^2$, \Cref{lem:qRK_STD_noncorruptE} bounds the expected squared approximation error in the event a non-corrupt index is chosen. 
        \begin{lem}
            \label{lem:qRK_STD_noncorruptE}
            \cite[Lemma 3]{qRK} Assuming the same hypotheses as \Cref{lem:qRK_STD_NEWcorruptE}, letting $B=\{j\in [m]:|r_j|\leq Q\}$ yields
            \[
                \E_{i\in B\sm S} \|x_{k+1}-\x\|^2 \leq \groupp{1-\frac{\s_{q-\beta,\min}^2(A)}{qm}}\|x_k-\x\|^2.
            \]
        \end{lem}

        Finally, \Cref{cor:qRK_STD_NEWbound} can be proved using \Cref{lem:qRK_STD_NEWcorruptE} and \Cref{lem:qRK_STD_noncorruptE} and we are now equipped to prove \Cref{cor:qRK_STD_NEWbound}:
        \begin{proof}[Proof of \Cref{cor:qRK_STD_NEWbound}]
            Observe,
            \begin{align*}
                \E\|x_{k+1}-\x\|^2 &= \PP(i\in S)\E_{i\in S}\|x_{k+1}-\x\|^2+\PP(i\in B\sm S)\E_{i\in B\sm S}\|x_{k+1}-\x\|^2\\
                &\leq \groupb{\frac{|S|}{qm}\groupp{2\groupp{1+\frac{\s_{\max}^2(A)}{m(1-q-\beta)}}}+\groupp{1-\frac{|S|}{qm}}\groupp{1-\frac{\s_{q-\beta,\min}^2(A)}{qm}}}\|x_k-\x\|^2,
            \end{align*}
            which is clearly increasing in $|S|$. Because $|S|\leq \beta m$, it follows
            \[
                \E\|x_{k+1}-\x\|^2 \leq \groupb{1+\frac{\beta}{q}\groupp{1+\frac{2\s_{\max}^2(A)}{m(1-q-\beta)}}-(q-\beta)\frac{\s_{q-\beta,\min}^2(A)}{q^2m}}\|x_k-\x\|^2.
            \]
            To ensure decay, we require $q>\beta$ and 
            \[
                \frac{q}{q-\beta}\groupp{\frac{\beta m}{\s_{\max}^2(A)}+\frac{2\beta}{1-q-\beta}}< \frac{\s_{q-\beta,\min}^2(A)}{\s_{\max}^2(A)}.
            \]
        \end{proof}
        
    \subsection{Proof of the Alternate dqRK Bound (\Cref{cor:dqRK_STD_NEWbound})}
        \label{sec:dqRK_NEWBound_proof}
        Just as in \Cref{sec:qRK_NEWBound_proof}, this section considers the system \eqref{eq:genlinsys} with $\eta=0$. Then $r_j=\lll x_k, a_j\rrr-b_j$, where $x_k\in \R^n$ is fixed, and the two quantiles $Q_0$ and $Q$ are given by 
        \begin{align*}
            Q_0&=q_0\quant(\{|r_j|\}_{j=1}^m) \\
            Q&=q\quant(\{|r_j|\}_{j=1}^m).
        \end{align*}
        We use $C$ to denote the indices corresponding to the corrupted entries, i.e. $C=\supp(\xi)$, and $S\sset C$ is defined to be the corrupt admissible indices given by $\{j\in C: Q_0<|r_j|\leq Q\}$. Letting $B=\{j\in [m]: Q_0<|r_j|\leq Q\}$ be the collection of admissible indices, we may express the \textit{non-corrupt} admissible indices by $B\sm S$. 
        
        \begin{cor}
            \label{cor:dqRK_STD_quantilebound}
            \cite[Corollary 2]{dqRK} Let $0<q_0<q<1-\beta$. If $\x$ is a solution to the system $Ax=b_t$ and $\|\xi\|_0\leq \beta m$, then 
            \[
                Q\leq \frac{\s_{\max}(A)}{\sqrt{m}\sqrt{1-q-\beta}}\|x_k-\x\|.
            \]
        \end{cor}
    
        \begin{cor}
            \label{cor:dqRK_STD_NEWcorruptE}
            Assuming the same hypotheses as in \Cref{cor:dqRK_STD_quantilebound}, letting $x_{k+1}$ be random vector generated by dqRK on $Ax=b$ with initial point $x_k$ and $b=b_t+\xi$ yields the following: 
            \[
                \E_{i\in S}\|x_{k+1}-\x\|^2\leq 2\groupp{1+\frac{\s_{\max}^2(A)}{m(1-q-\beta)}}\|x_k-\x\|^2.
            \]
        \end{cor}
    
        \begin{lem}
            \label{lem:dqRK_STD_noncorruptE}
            \cite[Lemma 2]{dqRK} In the same setting as \Cref{cor:dqRK_STD_NEWcorruptE}, if $\lll x_k,a_j\rrr =b_j$ for some $j\in[m]$, then 
            \[
                \E_{i\in B\sm S}\|x_{k+1}-\x\|^2\leq \groupp{1-\frac{\s_{q-\beta,\min}^2(A)}{q m}-\frac{\s_{q_0-\beta,\min}^2(A)}{q_0q m^2}}\|x_k-\x\|^2.
            \]
        \end{lem}
    
        We are now equipped to prove \Cref{cor:dqRK_STD_NEWbound}:
        \begin{proof}[Proof of \Cref{cor:dqRK_STD_NEWbound}]
            Similarly to the proof of \cite[Theorem 2]{dqRK}, \Cref{cor:dqRK_STD_NEWcorruptE} with \Cref{lem:dqRK_STD_noncorruptE} yields 
            \begin{align*}
                \E\|x_{k+1}-\x\|^2 &= \frac{|S|}{(q-q_0)m}\E_{i\in S}\|x_{k+1}-\x\|^2+\groupp{1-\frac{|S|}{(q-q_0)m}}\E_{i\in B\sm S}\|x_{k+1}-\x\|^2 \\
                &\leq \Bigg[\groupp{1-\frac{|S|}{(q-q_0)m}}\groupp{1-\frac{\s_{q-\beta,\min}^2(A)}{q m}-\frac{\s_{q_0-\beta,\min}^2(A)}{q_0q m^2}} \\
                &+\frac{2|S|}{(q-q_0)m}\groupp{1+\frac{\s_{\max}^2(A)}{m(1-q-\beta)}}\Bigg]\|x_k-\x\|^2,
            \end{align*}
            Which is increasing in $|S|$. Thus, 
            \begin{align*}
                \E\|e_{k+1}\|^2 &\leq \groupb{1+\frac{\beta}{q-q_0}\groupp{1+\frac{2\s_{\max}^2(A)}{m(1-q-\beta)}}-(q-q_0-\beta)\groupp{\frac{\s_{q-\beta,\min}^2(A)}{(q-q_0)q m}+\frac{\s_{q_0-\beta,\min}^2(A)}{(q-q_0)q_0q m^2}}}\|e_k\|^2,
            \end{align*}
            where $e_k=x_k-\x$. To ensure decay, we require
            \[
                \frac{q}{q-q_0-\beta}\groupp{\frac{\beta m}{\s_{\max}^2(A)}+\frac{2\beta}{1-q-\beta}}< \frac{1}{\s_{\max}^2(A)}\groupp{\s_{q-\beta,\min}^2(A)+\frac{\s_{q_0-\beta,\min}^2(A)}{q_0m}}.
            \]
        \end{proof}
    
    \subsection{Proof of the qRK Error Horizon Bound (\Cref{theorem:qRK_EH})}
        \label{sec:qRK_EH_proof}
        We now consider the full system \eqref{eq:genlinsys} where $\eta$ may be nonzero. We will denote the solution hyperplanes to the sparsely-corrupted system \eqref{eq:truesyseq} by $~{H_i\dl=\{x:\lll x,a_i\rrr =(b_t+\xi)_i\}}$. Similarly, the solution hyperplanes to the corrupted system \eqref{eq:genlinsys} will be given by $H_i=\{x:\lll x, a_i\rrr = b_i\}$. 

        Ideally, we wish to say that qRK applied to $Ax=b$ will yield convergence to the true solution $\x$ satisfying $A\x=b_t$ up to an error horizon dependent only on the small noise $\eta$. This would closely mirror the main result of $\cite{needell_LS}$, and it may be tempting to provide a proof almost identical to \cite[Theorem 2.1]{needell_LS}, with the appropriate changes being made. Such a proof might read as follows:
        \begin{fakeproof}
            Note that in the proof of \Cref{theorem:qrk}, it is shown that with the same hypothesis as in \Cref{theorem:qrk}, for any $x_{k-1}\in \R^n$, running one iteration of qRK on the system $Ax=b_t+\xi$ to produce $x_k\dl$ witnesses 
            \begin{equation}
                \E\|x_k\dl-\x\|^2\leq (1-C)\|x_{k-1}-\x\|^2, \label{eq:qrk_bound}
            \end{equation}
            where the expectation is conditional on the first $k-1$ iterates.
            
            Now note that if $H_i$ is the solution space used to produce $x_k$, then by letting $x_k\dl$ be the projection of $x_{k-1}$ onto $H_i\dl$, the same argument found in the proof of \cite[Theorem 2.1]{needell_LS} with the assumption $\supp(\eta)\cap \supp(\xi)=\varnothing$ shows $x_k-\x= x_k\dl -\x + \eta_i a_i$ and $\|x_k-\x\|^2 = \|x_k\dl -\x\|^2 + |\eta_i|^2$. Then it is clear 
            \begin{align}
                \E\|x_k-\x\|^2 &\leq \E\|x_k\dl -\x\|^2 + \|\eta\|_\infty^2 \label{eq:wrongexpectation}\\
                &\leq (1-C)\|x_{k-1}-\x\|^2 + \|\eta\|_\infty^2, \label{eq:badinequality}
            \end{align}
            where \eqref{eq:badinequality} follows from \eqref{eq:qrk_bound}. Taking the total expectation and recursively applying the results yields
            \begin{align*}
                \E\|x_k-\x\|^2 &\leq (1-C)^k\|x_0-\x\|^2 + \sum_{j=0}^{k-1} (1-C)^j \|\eta\|_\infty^2 \\
                &\leq (1-C)^k\|x_0-\x\|^2 +\frac{1}{C} \|\eta\|_\infty^2.
            \end{align*}
        \end{fakeproof}
        
        Unfortunately, it is erroneous to argue \eqref{eq:badinequality} follows from \eqref{eq:qrk_bound}. Namely, the expectation on the left-hand side of \eqref{eq:qrk_bound} is taken over the set of indices admissible based on 
        \[
            Q' = q\quant\groupp{\groupc{|\lll x_k, a_j\rrr -(b_t+\xi)_j|}_{j=1}^m},
        \]
        whereas the expectation on the right-hand side of \eqref{eq:wrongexpectation} is taken over the set of indices admissible based on 
        \[
            Q = q\quant\groupp{\groupc{|\lll x_k, a_j\rrr -b_j|}_{j=1}^m}.
        \]
        It is not clear that these two quantiles $Q'$ and $Q$ are the same, so it would not be reasonable to assume 
        \[
            \groupc{j\in [m]:|\lll x_k, a_j\rrr -b_j|\leq Q } = \groupc{j\in [m]:|\lll x_k, a_j\rrr -(b_t+\xi)_j|\leq Q' },
        \]
        whereby the two expectations are not necessarily equal. Fortunately, a valid argument towards a similar bound can be achieved.

        For this section, let $r_j=\lll x_k,a_j\rrr - (b_t-\xi)_j$, $Q=q\quant(\{|r_j-\eta_j|\}_{j=1}^m)$, and $Q'=q\quant(\{|r_j|\}_{j=1}^m)$, i.e. $r_j$ is the $j$-th residual entry of $Ax_k-b_t-\xi$ and $Q$ and $Q'$ are the quantiles determined by qRK on the systems 
        \begin{equation*}
            Ax=b_t+\xi+\eta, 
            \label{eq:fullcorrupsys}
        \end{equation*} 
        and 
        \begin{equation}
            Ax=b_t+\xi, \label{eq:sparsecorruptsys}
        \end{equation}
        respectively. Because the source of error of the false argument of \eqref{eq:qrk_bound} stems from the admissible indices differing when applying a step of qRK on \eqref{eq:fullcorrupsys} versus \eqref{eq:sparsecorruptsys}, we wish to provide analysis on a modified qRK applied to the system \eqref{eq:sparsecorruptsys}. That is, we will bound $\E\|x_{k+1}-\x\|^2$ when 
        \begin{equation}
            x_{k+1}=x_k+((b_t+\xi)_i-\lll x_k,a_i\rrr)a_i\quad \text{for}\quad i\sim \unif(\{j\in [m]:|r_j-\eta_j|\leq Q\}). \label{eq:qRK_EH_xkplus1scheme}
        \end{equation}
        For the remainder of this section, $S=\{j\in C: |r_j-\eta_j|\leq Q\}$ and $B= \{j\in [m]: |r_j-\eta_j|\leq Q\}$ where $C$ is the indices of non-zero entries of $\xi$, i.e. $C=\supp(\xi)$.
        
        This section is structured as follows: \Cref{cor:qRK_EH_quantilebound} provides a revised quantile bound, which is implemented in the proof of \Cref{cor:qRK_EH_corruptE} to control the size of $\E_{i\in S}\|x_{k+1}-\x\|^2$. After such, the results of the modified qRK on \eqref{eq:sparsecorruptsys} are presented in \Cref{lem:qRK_EH_differentindexE}. This result is then used in \Cref{theorem:qRK_EH}, which states qRK may be used to converge to a smaller error horizon given the appropriate conditions are met.
        \begin{cor}
            \label{cor:qRK_EH_quantilebound}
            Let $0<q<1-\beta$ and suppose $\x$ is a solution to $A\x=b_t$. If $\|\xi\|_0\leq \beta m$, then 
            \[
                Q\leq \frac{\s_{\max}\|x_k-\x\|}{\sqrt{m}\sqrt{1-q-\beta}}+ \frac{\sqrt{1-q}\|\eta\|_\infty}{\sqrt{1-q-\beta}}.
            \]
        \end{cor}
        \begin{proof}
            Because there are at least $m(1-q-\beta)$ indices $\ell$ such that $\ell\in N\Def \groupc{j\in [m]\sm C: |r_j-\eta_j|> Q}$, it is clear 
            \begin{align*}
                m(1-q-\beta)Q^2 &\leq \sum_{j \in N} \groupp{r_j-\eta_j}^2 \\
                &=\sum_{j\in N} \groupp{\lll x_k,a_j\rrr - (b_t)_j -\eta_j }^2\\
                &=\sum_{j\in N} \groupp{\lll x_k-\x, a_j\rrr-\eta_j}^2 \\
                &\leq \sum_{j\in N} \groupp{\groupp{\lll x_k-\x, a_j\rrr}^2 + 2\|\eta\|_\infty|\lll x_k-\x,a_j\rrr|+\|\eta\|_\infty^2}\\
                &= \|A_{N}(x_k-\x)\|^2+2\|\eta\|_\infty \|A_{N}(x_k-\x)\|_1+|N|\|\eta\|_\infty^2\\
                &\leq \s_{\max}^2\|x_k-\x\|^2 + 2\|\eta\|_\infty \sqrt{|N|}\s_{\max}\|x_k-\x\|+|N|\|\eta\|_\infty^2\\
                &= (\s_{\max}\|x_k-\x\|+\sqrt{|N|}\|\eta\|_\infty)^2\\
                &\leq (\s_{\max}\|x_k-\x\|+\sqrt{m(1-q)}\|\eta\|_\infty)^2.
            \end{align*}
        \end{proof}

        Though we could attempt to shorten this proof by writing 
        \begin{align*}
            m(1-q-\beta)Q^2 &\leq \sum_{j\in N} \groupp{\lll x_k-\x,a_j\rrr - \eta_j}^2 \\
            &=\|A_{N}(x_k-\x)-\eta_N\|^2 \\
            &\leq (\s_{\max}(A)\|x_k-\x\|+m(1-q)\|\eta\|_\infty)^2,
        \end{align*}
        this comes at the added cost of contributing a factor of $m(1-q)\geq 1$ to the $\|\eta\|_\infty$ term. In any case, either bound would serve to control the size of $\E_{i\in S}\|x_{k+1}-\x\|^2$. \Cref{cor:qRK_EH_corruptE} follows from \Cref{cor:qRK_EH_quantilebound} and asserts $\E_{i\in S}\|x_{k+1}-\x\|^2$, up to a constant function of the noisy corruption $\eta$, does not grow too much larger than $\|x_k-\x\|^2$.

        \begin{cor}
            \label{cor:qRK_EH_corruptE}
            In the same setting as \Cref{cor:qRK_EH_quantilebound}, if $x_{k+1}$ is the random variable defined by \eqref{eq:qRK_EH_xkplus1scheme} and $\supp(\xi)\cap \supp(\eta)=\varnothing$, then 
            \[
                \E_{i\in S} \| x_{k+1}-\x\|^2 \leq 2\groupp{1+\frac{\s_{\max}^2(A)}{m(1-q-\beta)}+\frac{2\s_{\max}(A)}{\sqrt{m}\sqrt{1-q-\beta}}}\|x_k-\x\|^2+\frac{2(1-q)\|\eta\|_\infty^2}{1-q-\beta}
            \]
        \end{cor}
        \begin{proof}
            Note that $\supp(\xi)\cap \supp(\eta)=\varnothing$ implies the entries of $b=b_t+\eta+\xi$ have the form 
            \[
                b_j=\begin{cases}(b_t+\eta)_j, & \text{if } j\in [m]\sm C \\
                (b_t+\xi)_j, & \text{if } j\in C.
                \end{cases}
            \]
            Thus, whenever $j\in S$, we have $|b_j-\lll x_k,a_j\rrr|=|r_j|\leq Q$. From here, it is straight-forward to see
            \begin{align*}
                \E_{i\in S} \| x_{k+1}-\x\|^2 &= \frac{1}{|S|}\sum_{i\in S} \|x_k+(b_i-\lll x_k,a_i\rrr)a_i -\x\|^2 \\
                &\leq \frac{1}{|S|} \sum_{i\in S}\groupp{|b_i-\lll x_k, a_i\rrr| + \|x_k-\x\|}^2 \\
                &\leq \frac{1}{|S|} \sum_{i\in S}\groupp{Q + \|x_k-\x\|}^2 \\
                &\leq \frac{1}{|S|} \sum_{i\in S}\groupp{\frac{\s_{\max}\|x_k-\x\|}{\sqrt{m}\sqrt{1-q-\beta}}+ \frac{\sqrt{1-q}\|\eta\|_\infty}{\sqrt{1-q-\beta}} + \|x_k-\x\|}^2 \\
                &\leq 2\groupp{\frac{\s_{\max}(A)}{\sqrt{m}\sqrt{1-q-\beta}}+1}^2\|x_k-\x\|^2+\frac{2(1-q)\|\eta\|_\infty^2}{1-q-\beta}.
            \end{align*}
        \end{proof} 

        The importance of the hypothesis $\supp(\xi)\cap \supp(\eta)=\varnothing$ lies in the ability to infer $|(b_t+\xi)_j-\lll x_k,a_j\rrr|\leq Q$ for $j\in S$. The quantile $Q$ is determined from residuals $(b_t+\eta+\xi)_j-\lll x_k,a_j\rrr$, and if $\supp(\xi)\cap \supp(\eta)\neq \varnothing$, it may not be the case that $|(b_t+\xi)_j-\lll x_k,a_j\rrr|\leq Q$ for all $j\in S$. It is shown in \Cref{sec:qRK_new_bounds} that this hypothesis may be removed.  

        Because the sets $B$ and $S$ from this section have different meanings than in \Cref{sec:qRK_NEWBound_proof}, we note here that \Cref{lem:qRK_STD_noncorruptE} still holds with the new definitions of $B$ and $S$. This is clear because the proof of \Cref{lem:qRK_STD_noncorruptE} relies only on the size of $B\sm S$ and the fact that the indices of $B\sm S$ do not lie in the support of $\xi$. 

        \begin{lem}
            \label{lem:qRK_EH_differentindexE}
            Suppose $\beta < q <1-\beta$ and $\eta,\xi\in \R^m$ with $\supp(\eta)\cap \supp(\xi)=\varnothing$. Let $\x$ be the solution to the system $Ax=b_t$ and $\|\xi\|_0\leq \beta m$. Suppose
            \[
                \frac{q}{q-\beta}\groupp{\frac{\beta m}{\s_{\max}^2(A)}+\frac{2\beta}{1-q-\beta}+\frac{4\beta\sqrt{m}}{\s_{\max}(A)\sqrt{1-q-\beta}}}<\frac{\s_{q-\beta,\min}^2(A)}{\s_{\max}^2(A)}.
            \]
            If $x_{k+1}$ is the random variable defined by \eqref{eq:qRK_EH_xkplus1scheme}, then
            \[
                \E\|x_{k+1}-\x\|^2\leq (1-C)\E\|x_k-\x\|^2+\frac{2\beta(1-q)}{q(1-q-\beta)}\|\eta\|_\infty^2,
            \]
            where
            \[
                C=(q-\beta)\frac{\s_{q-\beta,\min}^2(A)}{q^2m}-\frac{\beta}{q}\groupp{1+\frac{2\s_{\max}^2(A)}{m(1-q-\beta)}+\frac{4\s_{\max}(A)}{\sqrt{m}\sqrt{1-q-\beta}}}.
            \]
        \end{lem}
        \begin{proof}
            This proof is similar to that of \Cref{cor:qRK_STD_NEWbound}, where the only difference stems from the addition of an error horizon term and the use of a slightly different quantile bound. Note
            \begin{align*}
            \E\|x_{k+1}-\x\|^2 &= \PP(i\in S)\E_{i\in S}\|x_{k+1}-\x\|^2+\PP(i\in B\sm S)\E_{i\in B\sm S}\|x_{k+1}-\x\|^2\\
            &\leq \frac{|S|}{qm}\groupp{2\groupp{1+\frac{\s_{\max}^2(A)}{m(1-q-\beta)}+\frac{2\s_{\max}(A)}{\sqrt{m}\sqrt{1-q-\beta}}}\|x_k-\x\|^2+\frac{2\|\eta\|_\infty^2}{1-q-\beta}}\\
            &\quad+\groupp{1-\frac{|S|}{qm}}\groupp{1-\frac{\s_{q-\beta,\min}^2(A)}{qm}}\|x_k-\x\|^2,
        \end{align*}
        which is increasing in $|S|$. By re-arranging after substituting $\beta m$ for $|S|$, we arrive at 
        \begin{equation}
            \begin{split}
                \E\|x_{k+1}-\x\|^2 &\leq \groupb{1+\frac{\beta}{q}\groupp{1+\frac{2\s_{\max}^2(A)}{m(1-q-\beta)}+\frac{4\s_{\max}(A)}{\sqrt{m}\sqrt{1-q-\beta}}}-(q-\beta)\frac{\s_{q-\beta,\min}^2(A)}{q^2m}}\|x_k-\x\|^2 \\
                &\quad+\frac{2\beta(1-q)}{q(1-q-\beta)}\|\eta\|_\infty^2. 
            \end{split}
            \label{eq:nonconditional_expectation}
        \end{equation}
        To ensure decay, we require 
        \[
            \frac{q}{q-\beta}\groupp{\frac{\beta m}{\s_{\max}^2(A)}+\frac{2\beta}{1-q-\beta}+\frac{4\beta\sqrt{m}}{\s_{\max}(A)\sqrt{1-q-\beta}}}<\frac{\s_{q-\beta,\min}^2(A)}{\s_{\max}^2(A)}.
        \]
        \end{proof}

        Now the expectation in \Cref{lem:qRK_EH_differentindexE} and the expectation $\E\|x_{k+1}-\x\|^2$, where $x_{k+1}$ is given by applying qRK on $~{Ax=b_t+\eta+\xi}$, are both taken over the same indices. This is exactly how the proof of \Cref{theorem:qRK_EH} proceeds:

        \begin{proof}[Proof of \Cref{theorem:qRK_EH}]
            Note that $x_k-\x=x_k\dl-\x+\eta a_i$ and $\|x_k-\x\|^2=\|x_k\dl-\x\|^2 + |\eta_i|^2$, where $x_k$ is the projection of $x_{k-1}$ onto $H_i\dl$ when hyperplane $H_i$ is chosen to produce $x_k$. Then, if we let $C$ assume the same value as in \Cref{lem:qRK_EH_differentindexE}, 
            \begin{align*}
                \E\|x_k-\x\|^2 &\leq \E\|x_k\dl-\x\|^2 + \|\eta\|_\infty^2 \\
                &\leq (1-C)\|x_{k-1}-\x\|^2 + \groupp{\frac{2\beta(1-q)}{q(1-q-\beta)}+ 1}\|\eta\|_\infty^2,
            \end{align*}
            whereby taking the total expectation and applying induction, we arrive at
            \begin{align*}
                \E\|x_k-\x\|^2 &\leq (1-C)^k\|x_0-\x\|^2+\groupp{\frac{2\beta(1-q)}{q(1-q-\beta)}+ 1}\|\eta\|_\infty^2\sum_{j=0}^{k-1}(1-C)^j \\
                &\leq (1-C)^k\|x_0-\x\|^2+\frac{1}{C}\groupp{\frac{2\beta(1-q)}{q(1-q-\beta)}+ 1}\|\eta\|_\infty^2.
            \end{align*}
        \end{proof}
        
    \subsection{Proof of the dqRK Error Horizon Bound (\Cref{theorem:dqRK_EH})}
        \label{sec:dqRK_EH_proof}
        Let $r_j=\lll x_k,a_j\rrr - (b_t-\xi)_j$, $Q_0=q_0\quant(\{|r_j-\eta_j|\}_{j=1}^m)$, and $Q=q\quant(\{|r_j-\eta_j|\}_{j=1}^m)$, i.e. $r_j$ is the $j$-th residual entry of $Ax_k-b_t-\xi$ and $Q_0$ and $Q$ are the quantiles determined by dqRK on the system \eqref{eq:fullcorrupsys}. Just as in \Cref{sec:qRK_EH_proof} we first bound $\E\|x_{k+1}-\x\|^2$ when 
        \begin{equation}
            x_{k+1}=x_k+((b_t+\xi)_i-\lll x_k,a_i\rrr)a_i\quad \text{for}\quad i\sim \unif(\{j\in [m]:Q_0<|r_j-\eta_j|\leq Q\}). \label{eq:dqRK_EH_xkplus1scheme}
        \end{equation}
        For the remainder of this section, $S=\{j\in C: Q_0<|r_j-\eta_j|\leq Q\}$ and $B= \{j\in [m]: Q_0<|r_j-\eta_j|\leq Q\}$ where $C$ is the indices of non-zero entries of $\xi$, i.e. $C=\supp(\xi)$. The results of this section leverage the work done for \Cref{theorem:qRK_EH}. Namely, \Cref{cor:dqRK_EH_quantilebound} and \Cref{cor:dqRK_EH_corruptE} are the same as \Cref{cor:qRK_EH_quantilebound} and \Cref{cor:qRK_EH_corruptE} (including the proofs), respectively, with $q$ in place of $q$. The analogue of \Cref{lem:qRK_STD_noncorruptE} and \Cref{lem:qRK_EH_differentindexE} in the dqRK setting are also provided in this section.

        \begin{cor}
            \label{cor:dqRK_EH_quantilebound}
            Let $0<q_0<q<1-\beta$ and suppose $\x$ is a solution to $A\x=b_t$. If $\|\xi\|_0\leq \beta m$, then 
            \[
                Q\leq \frac{\s_{\max}\|x_k-\x\|}{\sqrt{m}\sqrt{1-q-\beta}}+ \frac{\sqrt{1-q}\|\eta\|_\infty}{\sqrt{1-q-\beta}}.
            \]
        \end{cor}
    
        \begin{cor}
            \label{cor:dqRK_EH_corruptE}
            In the same setting as \Cref{cor:dqRK_EH_quantilebound}, if $x_{k+1}$ is the random variable defined by \eqref{eq:dqRK_EH_xkplus1scheme} and $\supp(\xi)\cap \supp(\eta)=\varnothing$, then 
            \[
                \E_{i\in S} \| x_{k+1}-\x\|^2 \leq 2\groupp{1+\frac{\s_{\max}^2(A)}{m(1-q-\beta)}+\frac{2\s_{\max}(A)}{\sqrt{m}\sqrt{1-q-\beta}}}\|x_k-\x\|^2+\frac{2(1-q)\|\eta\|_\infty^2}{1-q-\beta}
            \]
        \end{cor}        
        \begin{cor}
            \label{cor:dqRK_EH_noncorruptE}
            In the same setting as \Cref{cor:dqRK_EH_corruptE}, 
            \[
                \E_{i\in B\sm S}\|x_{k+1}-\x\|^2\leq \groupp{1-\frac{\s_{q-\beta,\min}^2(A)}{q m}}\|x_k-\x\|^2.
            \]
        \end{cor}
        Though we may desire to use the bound 
        \[
            \E_{i\in B\sm S}\|x_{k+1}-\x\|^2\leq \groupp{1-\frac{\s_{q-\beta,\min}^2(A)}{q m}-\frac{\s_{q_0-\beta,\min}^2(A)}{q_0q m^2}}\|x_k-\x\|^2,
        \]
        akin to that of \cite[Lemma 2]{dqRK}, this would require the additional hypothesis that $\lll x_k,a_j\rrr =(b_t-\xi)_j$ for some $j\in[m]$. This proves to be difficult to satisfy for a proof \Cref{theorem:dqRK_EH} mimicking that of \Cref{theorem:qRK_EH}. Fortunately, a simple application of \cite[Lemma 1]{dqRK} shows without this hypothesis, it is still true that 
        \[
            \E_{i\in B\sm S}\|x_{k+1}-\x\|^2\leq \E_{i\in B_0\sm S_0}\|x_{k+1}-\x\|^2,
        \]
        where $B_0 = \{j\in [m]: |r_j-\eta_j|\leq Q\}$ and $S_0=\{j\in C: |r_j-\eta_j|\leq Q\}$, so that $B_0\sm S_0$ is the set of non-corrupt admissible indices. By \cite[Lemma 3]{qRK}, we have 
        \[
            \E_{i\in B\sm S}\|x_{k+1}-\x\|^2\leq \groupp{1-\frac{\s_{q-\beta,\min}^2(A)}{qm}}\|x_k-\x\|^2.
        \]
        \begin{lem}
            \label{lem:dqRK_EH_differentindexE}
            Suppose $\beta < q_0<q <1-\beta$, $q-q_0>\beta$, and $\eta,\xi\in \R^m$ with $\supp(\eta)\cap \supp(\xi)=\varnothing$. Let $\x$ be the solution to the system $Ax=b_t$ and $\|\xi\|_0\leq \beta m$. Suppose
            \[
                \frac{q}{q-q_0-\beta}\groupp{\frac{\beta m}{\s_{\max}^2(A)}+\frac{2\beta}{1-q-\beta}+\frac{4\beta\sqrt{m}}{\s_{\max}(A)\sqrt{1-q-\beta}}}<\frac{\s_{q-\beta,\min}^2(A)}{\s_{\max}^2(A)}.
            \]
            If $x_{k+1}$ is the random variable defined by \eqref{eq:dqRK_EH_xkplus1scheme}, with $\lll x_0,a_j\rrr=(b_t+\xi)_j$ for some $j\in [m]$, then
            \[
                \E\|x_{k+1}-\x\|^2\leq (1-C)\E\|x_k-\x\|^2+\frac{2\beta(1-q)}{(q-q_0)(1-q-\beta)}\|\eta\|_\infty^2,
            \]
            where
            \[
                C=(q-q_0-\beta)\frac{\s_{q-\beta,\min}^2(A)}{(q-q_0)q m}-\frac{\beta}{q-q_0}\groupp{1+\frac{2\s_{\max}^2(A)}{m(1-q-\beta)}+\frac{4\s_{\max}(A)}{\sqrt{m}\sqrt{1-q-\beta}}}.
            \]
        \end{lem}
        \begin{proof}
            This proof is almost identical to \Cref{lem:qRK_EH_differentindexE}, except for the different probabilities $\PP(i\in S)$ and $\PP(i\not\in S)$. Note from \Cref{cor:dqRK_EH_corruptE} and \Cref{cor:dqRK_EH_noncorruptE}, 
            \begin{align*}
            \E\|x_{k+1}-\x\|^2 &= \PP(i\in S)\E_{i\in S}\|x_{k+1}-\x\|^2+\PP(i\in B\sm S)\E_{i\in B\sm S}\|x_{k+1}-\x\|^2\\
            &\leq \frac{|S|}{(q-q_0)m}\groupp{2\groupp{1+\frac{\s_{\max}^2(A)}{m(1-q-\beta)}+\frac{2\s_{\max}(A)}{\sqrt{m}\sqrt{1-q-\beta}}}\|x_k-\x\|^2+\frac{2\|\eta\|_\infty^2}{1-q-\beta}}\\
            &\quad+\groupp{1-\frac{|S|}{(q-q_0)m}}\groupp{1-\frac{\s_{q-\beta,\min}^2(A)}{q m}}\|x_k-\x\|^2,
        \end{align*}
        which is increasing in $|S|$. By noting $|S|\leq \beta m$ and re-writing this expression, it is clear 
        \begin{align*}
            \E\|e_{k+1}\|^2 &\leq \Bigg[1+\frac{\beta}{q-q_0}\groupp{1+\frac{2\s_{\max}^2(A)}{m(1-q-\beta)}+\frac{4\s_{\max}(A)}{\sqrt{m}\sqrt{1-q-\beta}}}-(q-q_0-\beta)\frac{\s_{q-\beta,\min}^2(A)}{(q-q_0)q m}\Bigg]\|e_k\|^2 \\
            &\quad+\frac{2\beta(1-q)}{(q-q_0)(1-q-\beta)}\|\eta\|_\infty^2,
        \end{align*}
        where $e_{k}=x_k-\x$. To ensure decay, we require 
        \[
            \frac{q}{q-q_0-\beta}\groupp{\frac{\beta m}{\s_{\max}^2(A)}+\frac{2\beta}{1-q-\beta}+\frac{4\beta\sqrt{m}}{\s_{\max}(A)\sqrt{1-q-\beta}}}<\frac{\s_{q-\beta,\min}^2(A)}{\s_{\max}^2(A)}.
        \]
        \end{proof}

        The proof of \Cref{theorem:dqRK_EH} follows the same argument used to prove \Cref{theorem:qRK_EH} with \Cref{lem:dqRK_EH_differentindexE} used in place of \Cref{lem:qRK_EH_differentindexE}.

\section{Acknowledgements}
This work is partially supported by the 2024-2025 Rose Hills Innovator Grant. We would also like to thank Elizaveta Rebrova for helping improve earlier versions of this work.

\bibliographystyle{plain}
\bibliography{refs}

\end{document}